\newtheorem{theo}{Theorem}
\newtheorem{lem}{Lemma}
\newtheorem{cor}{Corollary}
\newtheorem{rem}{Remark}
\newtheorem{defn}{Definition}
\def\RR{\mathbb R}
\def\NN{\mathbb N}
\def\pmatrix{ \left( \begin{array} }
\def\endpmatrix{ \end{array} \right) }
\def\aa{\alpha}
\def\cc{\gamma}
\def\bff{{\bf f}}
\def\bfg{{\bf g}}
\def\bfI{{\bf I}}
\def\bfgam{\bm{\gamma}}
\def\bfzeta{\bm{\zeta}}
\def\bfp{{\bf p}}
\def\bfu{{\bf u}}
\def\bfy{{\bf y}}
\def\bfw{{\bf w}}
\def\bfz{{\bf z}}
\def\bfo{{\bf 0}}
\def\no{\noindent}
\def\diag{{\rm diag}}
\def\proof{\underline{Proof}\quad}
\def\QED{~\mbox{$\Box$}}
\def\phi{\varphi}
\def\hc{\tau}
\def\hy{w}
\def\hbfy{\hat{\bfy}}
\def\hbff{\hat{\bff \hphantom{.}}\hspace*{-.1cm}}
\def\hbfg{\hat{\bfg}}
\def\hbfp{\hat{\bfp}}
\def\hbfI{\hat{\bfI}}
\def\P{{\cal P}}
\def\hP{\hat{\P}}
\def\hG{\hat{G}}
\def\cI{{\cal I}}
\def\hcI{\hat{\cI}}
\def\bhzeta{\hat{\bfzeta}}
\def\hbfu{\hat{\bfu}}
\def\hbfz{\hat{\bfz}}
\def\ind{\mbox{\em ind}}
\journalname{~}
\begin{document}

\title{Analisys of Hamiltonian Boundary
Value Methods (HBVMs): a class of energy-preserving Runge-Kutta
methods for the numerical solution of polynomial Hamiltonian
systems\thanks{Work developed within the project ``Numerical
methods and software for differential
equations''.}} 

\titlerunning{Hamiltonian BVMs}        

\author{L.\,Brugnano \and
F.\,Iavernaro \and
 D.\,Trigiante }

\authorrunning{B.\,I.\,T.} 

\institute{Luigi Brugnano \at Dipartimento di Matematica
``U.\,Dini'', Universit\`a di Firenze, Viale Morgagni 67/A, 50134
Firenze (Italy).\\ \email{luigi.brugnano@unifi.it} \and Felice
Iavernaro \at Dipartimento di Matematica, Universit\`a di Bari,
Via Orabona 4,  70125 Bari (Italy).\\ \email{ felix@dm.uniba.it}
\and Donato Trigiante \at Dipartimento di Energetica
``S.\,Stecco'', Universit\`a di Firenze, Via Lombroso 6/17, 50134
Firenze (Italy).\\ \email{ trigiant@unifi.it} }

\date{Received: September 4, 2009 / Revised: December 2, 2009}

\maketitle

\begin{abstract}
One main issue, when numerically integrating autonomous
Hamiltonian systems, is the long-term conservation of some of its
invariants, among which the Hamiltonian function itself. For
example, it is well known that classical symplectic methods can
only exactly preserve, at most, quadratic Hamiltonians. In this
paper, a new family of methods, called {\em Hamiltonian Boundary
Value Methods (HBVMs)}, is introduced and analyzed. HBVMs are able
to {\em exactly} preserve, in the discrete solution, Hamiltonian
functions of polynomial type of arbitrarily high degree. These
methods turn out to be symmetric, precisely $A$-stable, and can
have arbitrarily high order. A few numerical tests confirm the
theoretical results.

\keywords{polynomial Hamiltonian \and energy drift \and symmetric
methods \and collocation methods \and Hamiltonian Boundary Value
Methods \and energy-preserving methods \and one-step methods \and
Runge-Kutta methods}
\subclass{65P10 \and  65L05}
\end{abstract}

\section{Introduction}\label{intro}
\vspace{-.5cm}
\begin{flushright}
  \textsf{\em Omne ignotum pro magnifico}\\
  \textsf{Tacitus, \em Agr. 30}\\
  \textsf{\em (sed interdum exitus mathematicae\\
  investigationis vere magnifici sunt)}
\end{flushright}

The numerical solution of Hamiltonian problems is a relevant issue
of investigation since many years: we refer to the recent
monographs \cite{HLW,LR} for a comprehensive description of this
topic, and to the references therein.

In a certain sense, the use of a numerical method acts as
introducing  a small perturbation in the original system which, in
general, destroys all of its first integrals. The study of the
preservation of invariant tori in the phase space of nearly
integrable Hamiltonian systems has been a central theme in the
research since the pioneering work of Poincar\'e, the final goal
being to asses the stability of the solar system. From a numerical
point of view, results in this respect are still poor, and this is
justified by considering the delicacy of the problem: as testified
by KAM theory, even small Hamiltonian perturbations of completely
integrable systems, do not prevent the disappearance of most of
the tori, unless a Diophantine condition on the frequencies of the
unperturbed system is satisfied.

At the times when research on this topic was started, there were
no available numerical methods possessing such conservation
features. A main approach to the problem was the devising of
symplectic methods. However, though the numerical solution
generated by symplectic (and/or symmetric) methods  shows some
interesting long-time behavior (see, for example, \cite[Theorems
X.3.1 and XI.3.1]{HLW}), it was observed that symplecticity alone
can only assure, at most, the conservation of quadratic
Hamiltonian functions, unless they are coupled with some
projection procedure. In the general case, conservation cannot be
assured, even though a quasi-preservation can be expected for
reversible problems, when symmetric methods are used (see, e.g.,
\cite{BT2}). On the other hand, a numerical ``drift'' can be
sometimes observed in the discrete solution \cite{Faou}. One of
the first successful attempts to solve the problem of loss of
conservation of the Hamiltonian function by the numerical
solution, is represented by \textit{discrete gradient methods}
(see \cite{MQR} and references therein). Purely algebraic
approaches have been also introduced (see, e.g., \cite{CFM}),
without presenting any energy-preserving method.

A further approach was considered in \cite{QL}, where the {\em
averaged vector field method} was proposed and shown to conserve
the energy function of canonical Hamiltonian systems. As was
recently outlined (see \cite{M2AN}), approximating the integral
appearing in such method by means of a quadrature formula (based
upon polynomial interpolation) yields a family of second order
Runge-Kutta methods. These latter methods represent an instance of
energy-preserving Runge-Kutta methods for polynomial Hamiltonian
problems: their first appearance may be found in \cite{IP1}, under
the name of {\em $s$-stage trapezoidal methods}. Additional
examples of fourth and sixth-order Runge-Kutta methods were
presented in \cite{IP2} and \cite{IT2}.

In \cite{IP1,IP2,IT2}, the derivation of such energy preserving
Runge-Kutta formulae relies on the  definition of the so called
``discrete line integral'', first introduced in \cite{IT1}.
However, a comprehensive analysis of such methods has not been
carried out so far, so that their properties were not known and,
moreover, their practical construction was difficult.

In this paper we provide such an analysis, which allows us to
derive symmetric methods, of arbitrarily high order, able to
preserve Hamiltonian functions of polynomial type, of any
specified degree. Such methods are here named {\em Hamiltonian
Boundary Value Methods (HBVMs)}, since the above approach has
been, at first, studied (see, e.g., \cite{IP2,IT2}) in the
framework of {\em block Boundary Value Methods}. The latter are
block one-step methods \cite{BT1}. However, for the sake of
clarity, and later reference, the equivalent Runge-Kutta
formulation of HBVMs will be here also considered.

In the remaining part of this section, we introduce the background
information concerning the approach. Let then

\begin{equation}\label{hampro}
y' = J\nabla H(y), \qquad y(0) = y_0\in\RR^{2m},
\end{equation}

\no be a Hamiltonian problem in canonical form, where, by setting
as usual $I_m$ the identity matrix of dimension $m$,
\begin{equation}\label{J} J=\pmatrix{rr}
&I_m\\-I_m\endpmatrix,\end{equation}

\no and where the Hamiltonian function, $H(y)$, is a polynomial of
degree $\nu$. It is well known that, for any $y^* \in \RR^{2m}$,
\begin{equation}\label{Hy}
H(y^*) - H(y_0) =\int_{y_0\rightarrow y^*} \nabla H(y)^Tdy
       = \int_0^1 \sigma'(t)^T\nabla H(\sigma(t))dt,
\end{equation}

\no where $\sigma: [0,1] \rightarrow R^{2m}$ is any smooth
function such that $$\sigma(0) = y_0, \qquad\sigma(1) = y^*.$$

In particular, over a trajectory, $y(t)$, of
(\ref{hampro}), one has
\begin{eqnarray*}
H(y(t)) - H(y_0)  &=& \int_0^t \nabla H(y(\tau))^T y'(\tau)d\tau \\
\nonumber &=& \int_0^t \nabla H(y(\tau))^T J \nabla
H(y(\tau))d\tau = 0,\end{eqnarray*}

\no due to the fact that matrix $J$ in (\ref{J}) is
skew-symmetric.

Here we consider the case where $\sigma(t)$ is a polynomial of
degree $s$ yielding an approximation to the true solution $y(t)$
in the time interval $[0, h]$ which, without loss of generality,
is hereafter normalized to $[0,1]$. More specifically, given the
$s+1$ abscissae
\begin{equation}\label{ci}
0=c_0<c_1<\dots<c_s = 1,
\end{equation}
and the approximations $y_i \approx y(c_i)$, $\sigma(t)$ is meant
to be defined by  the interpolation conditions
\begin{equation}\label{yi}
\sigma(c_i) = y_i, \qquad i=0,\dots,s.
\end{equation}

\no Actually, the approximations $\{y_i\}$ will be unknown, until
the new methods will be fully derived.

A different, though related concept, is that of collocating
polynomial for the problem, at the abscissae (\ref{ci}). Such a
polynomial is the unique polynomial $u(t)$, of degree $s+1$,
satisfying
\begin{equation}\label{coll}
u(c_0) = y_0, \qquad\mbox{and}\qquad u'(c_i) = J\nabla H(u(c_i)),
\qquad i=0,\dots,s.
\end{equation}

It is well known that (\ref{coll}) define a Runge-Kutta {\em
collocation method}. Moreover, the set of abscissae (\ref{ci})
defines a corresponding quadrature formula with weights
\begin{equation}\label{bi}b_i = \int_0^1 \prod_{j=0,\,j\ne
i}^s \frac{t-c_j}{c_i-c_j}dt, \qquad i=0,1,\dots,s, \end{equation}

\no which has degree of precision ranging from $s$ to $2s-1$,
depending on the choice of the abscissae (\ref{ci}). In
particular, the highest precision degree is obtained by using the
Lobatto abscissae, which we shall consider in the
sequel.\,\footnote{\, Different choices of the abscissae will be
the subject of future investigations.} The underlying collocation
method has, then, order $2s$.

\begin{rem}\label{Rs}
Choosing a Gauss distribution of the abscissae $\{c_i\}$ raises
the degree of precision of the related  quadrature formula to
$2s+1$. In such a case, it is interesting to observe that applying
(\ref{Hy}) along the trajectory $u(t)$ and exploiting the
collocation conditions \eqref{coll}, one gets
\begin{eqnarray}\label{intex}
H(u(c_s))-H(y_0)&=&\int_0^1 u'(t)^T\nabla H(u(t))dt\\ &=&
\sum_{i=0}^s b_i u'(c_i)^T\nabla H(u(c_i)) +R_s = R_s,\nonumber
\end{eqnarray}

\no where $R_s$ is the error in the approximation of the line
integral. Therefore, $H(u(c_s))=H(y_0)$ if and only if $R_s=0$,
which is implied by assuming that the quadrature formula with
abscissae $\{c_i\}$ and weights $\{b_i\}$ is exact when applied to
the integrand $u'(t)^T \nabla H(u(t))$. However, since the
integrand has degree
$$ s + (\nu-1)(s+1) = \nu (s+1) -1,$$

\no it follows that the maximum allowed value for $\nu$ is 2.
Indeed, it is well known that quadratic invariants are preserved
by symmetric collocation methods.  On the other hand, when
$\nu>2$,  in general $R_s$ does not vanish, so that $H(u(c_s))\ne
H(y_0)$.
\end{rem}

The above remark gives us a hint about how to approach the
problem. Note that in \eqref{intex} demanding that each term of
the sum representing the quadrature formula is null (i.e., the
conditions (\ref{coll})), is an excessive requirement to obtain
the conservation property, which causes the observed low degree of
precision. A weaker assumption, that would leave the result
unchanged, is to relax conditions \eqref{coll} so as to devise a
method whose induced quadrature formula, evaluated on a suitable
line integral that links two successive points of the numerical
solution, is exact and, at the same time, makes the corresponding
sum vanish, without requiring that each term is
zero.\footnote{\,More precisely, in the new methods, conditions
\eqref{coll} will turn out to be replaced by relations of the form
$\sigma'(c_i)=\sum_j \beta_{ij} J \nabla H(\sigma(c_j))$, which
resemble a sort of \textit{extended collocation condition} (see
also \cite[Section\,2]{IT2}) since $\sigma'(c_i)$ brings
information from the global behavior of the problem in the time
interval $[0, h]$ (see \eqref{expan}--\eqref{rhs} in
Section~\ref{colloc} and the analogues in Section \ref{derive}).}

If we use $\sigma(t)$ instead of $u(t)$, the integrand function
in \eqref{Hy} has degree $\nu s -1$ so that, in order for the
quadrature formula to be exact, one would need say, $k+1$ points,
where
\begin{equation}\label{kappa}
k=\left\lceil\frac{\nu s}2\right\rceil,
\end{equation}

\no if the corresponding Lobatto abscissae are used. Of course, in
such a case, the vanishing of the quadrature formula is no longer
guaranteed by conditions \eqref{coll} and must be obtained  by a
different approach. For this purpose, let
\begin{equation}\label{erre}
r = k-s,
\end{equation}

\no  be the number of the required additional points, and let
\begin{equation}\label{hci}
0<\hc_1<\dots<\hc_r<1,\end{equation}

\no be $r$ additional abscissae distinct from (\ref{ci}).
Moreover, let us define the following {\em silent stages} \cite{IT2},
\begin{equation}\label{hyi} \hy_i \equiv \sigma(\hc_i), \qquad
i=1,\dots,r.\end{equation}

Consequently, the polynomial $\sigma(t)$, which interpolates the
couples $(c_i,y_i)$, $i=0,1,\dots,s$, also interpolates the
couples $(\hc_i,\hy_i)$, $i=1,\dots,r$. That is, $\sigma(t)$
interpolates at $k+1$ points, even though it has only degree $s$.
If we define the abscissae
\begin{equation}\label{ti}
\{ t_0 < t_1 < \dots < t_k \} = \{ c_i \} \cup \{ \hc_i\},
\end{equation}

\no and dispose them according to a Lobatto distribution in
$[0,1]$ in  order to get a  formula of degree $2k$, we have that
\begin{equation}
\label{exactness}
\int_0^1 \sigma'(t)^T\nabla H(\sigma(t))dt = \sum_{i=0}^k b_i
\sigma'(t_i)^T\nabla H(\sigma(t_i)),
\end{equation}

\no and, consequently, the conservation condition becomes
\begin{equation}\label{intexk}
\sum_{i=0}^k b_i \sigma'(t_i)^T\nabla H(\sigma(t_i))=0,\end{equation}

\no where, now,
\begin{equation}\label{bik}
b_i = \int_0^1 \prod_{j=0,\,j\ne i}^k \frac{t-t_j}{t_i-t_j}dt,
\qquad i=0,1,\dots,k.
\end{equation}

The left-hand side of \eqref{intexk} is called  ``discrete line
integral'' because, as will be clear in the sequel, the choice of
the path $\sigma(t)$ is dictated by the numerical method by which
we will solve problem \eqref{hampro} (see \cite{IT2} for details).

With these premises, in Section~\ref{derive}, we devise such a
method, able to fulfill (\ref{intexk}),  after having set some
preliminary results in Section~\ref{colloc}. Before that, in
Section~\ref{Legendre} we state a few facts and notations
concerning the shifted Legendre polynomials, which is the
framework that we shall use to carry out the analysis of the
methods. A few numerical tests are then reported in
Section~\ref{tests} and, finally, a few conclusions are given in
Section~\ref{fine}. For sake of completeness, some properties of
shifted Legendre polynomials are listed in the Appendix.

\section{Preliminary results and notations}\label{Legendre}
The {\em shifted Legendre polynomials}, in the interval $[0,1]$,
constitute a family of polynomials, $\{P_n\}_{n\in\NN}$, for which
a number of known properties, named {\bf P1}--{\bf P12}, are
reported in the Appendix. We now set some notations and results,
to be used later.

With reference to the abscissae (\ref{ci}), let:
\begin{equation}\label{pj}\bfp_j =
\pmatrix{c}P_j(c_1)\\ \vdots\\ P_j(c_s)\endpmatrix, \qquad \hbfp_j
= \pmatrix{c} P_j(c_0)\\ \bfp_j \endpmatrix, \qquad j=0,\dots,s,
\end{equation}
\begin{equation}\label{P}
\P_j = \pmatrix{ccc} \bfp_0 & \dots &\bfp_j\endpmatrix\in
\RR^{s\times j+1},\quad\hP_j = \pmatrix{ccc} \hbfp_0 & \dots
&\hbfp_j\endpmatrix\in \RR^{s+1\times j+1},\end{equation}
\begin{equation}\label{Ij}\bfI_j =
\pmatrix{c}\int_{0}^{c_1}P_j(x)dx\\ \vdots\\
\int_{0}^{c_s}P_j(x)dx\endpmatrix,\quad\hbfI_j =
\pmatrix{c}\int_{0}^{c_0} P_j(x)dx\\ \bfI_j\endpmatrix \equiv
\pmatrix{c}0\\ \bfI_j\endpmatrix, \quad j=0,\dots,s.
\end{equation}

\begin{rem}\label{RIs} Observe that, from {\bf P11}, one obtains:
 \begin{equation}\label{Is}
  \bfI_s = \bfo.
 \end{equation}
\end{rem}

\no Furthermore, we set:
\begin{equation}\label{I}\cI_j = \pmatrix{ccc} \bfI_0 & \dots \bfI_j
\endpmatrix\in\RR^{s\times j+1}, \quad \hcI_j = \pmatrix{ccc} \hbfI_0 & \dots \hbfI_j
\endpmatrix\in\RR^{s+1\times j+1},
\end{equation}
\begin{equation}\label{DOm}
D_j=\pmatrix{cccc}1\\ &3\\&&\ddots\\&&&2j-1\endpmatrix\in\RR^{j\times j},
\quad \Omega = \pmatrix{ccc}b_0\\ &\ddots\\ &&b_s\endpmatrix,
\end{equation}
and  \begin{equation}\label{G}G_j = \pmatrix{rrrrr}
1 &-1\\
1 &0 &\ddots\\
  &1 &\ddots &-1\\
  &  &\ddots &0\\
  &  &     &1\endpmatrix\in\RR^{j+1\times j}.\end{equation}
By virtue of {\bf P3} and {\bf P9}, we deduce that
\begin{equation}\label{orto}
\hP_{j-1}^T \Omega \hP_j = \left[D_j^{-1}~\bfo\right],\qquad
j=1,\dots,s,
\end{equation}
and
\begin{equation}\label{integ}
\hcI_{j-1} = \frac{1}2\hP_j G_j D_j^{-1}, \qquad \cI_{j-1} =
\frac{1}2 \P_j G_j D_j^{-1},\qquad j=1,2,\dots.
\end{equation}

\begin{lem}\label{gram}
The matrix $\hP_s = \pmatrix{ccc} \hbfp_0& \dots &\hbfp_s\endpmatrix\in\RR^{s+1\times s+1}$ is nonsingular. Moreover
\begin{equation}\label{intgram}
\hcI_s = \pmatrix{cc} \bfo^T &0\\ \cI_{s-1} &\bfo\endpmatrix =
\hP_s \hG_s \equiv \hP_s \pmatrix{rrrrrr}
1 &-1     & 0 &\dots &0 \\
1 &0      &\ddots &\ddots &\vdots\\
  &\ddots &\ddots &-1 &0 &\\
  &       &\ddots &0 &0\\
  &       &     &1 &0\endpmatrix\in\RR^{s+1\times s+1},\end{equation}
  with $\cI_{s-1}\in\RR^{s\times s}$ a nonsingular matrix.
  \end{lem}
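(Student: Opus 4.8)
The plan is to dispatch the three assertions in order: the nonsingularity of $\hP_s$, the factorisation \eqref{intgram}, and finally the nonsingularity of $\cI_{s-1}$, which I expect to read off the factorisation by a rank count. I would first treat $\hP_s$ by a Vandermonde-type argument. Its columns are $\hbfp_0,\dots,\hbfp_s$, so the $i$-th entry of $\hP_s\mathbf a$, for $\mathbf a=(a_0,\dots,a_s)^T$, is $\sum_{j=0}^s a_jP_j(c_i)$, $i=0,\dots,s$. If $\hP_s\mathbf a=\bfo$ then the polynomial $\sum_{j=0}^s a_jP_j$, of degree at most $s$, vanishes at the $s+1$ distinct abscissae \eqref{ci}; hence it is identically zero, and the linear independence of $P_0,\dots,P_s$ forces $\mathbf a=\bfo$. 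Thus $\hP_s$ has trivial kernel and is invertible.

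Next come the block form and the factorisation. The first equality in \eqref{intgram} is immediate from the definitions: each $\hbfI_j$ has leading component $\int_0^{c_0}P_j=0$ since $c_0=0$, so the top row of $\hcI_s$ vanishes, while its remaining rows are $(\bfI_0\ \cdots\ \bfI_s)=(\cI_{s-1}\ \ \bfI_s)$, and $\bfI_s=\bfo$ by Remark~\ref{RIs} (property~{\bf P11}). For the factorisation I would apply \eqref{integ} with index $j=s+1$, giving $\hcI_s=\frac12\,\hP_{s+1}G_{s+1}D_{s+1}^{-1}$, and then collapse the product back onto $\hP_s$. The key observation is that the last row of $G_{s+1}$ has a single nonzero entry, sitting in its last column; consequently the extra column $\hbfp_{s+1}$ of $\hP_{s+1}$ can influence only the last column of the product, which is precisely $\hbfI_s=\bfo$. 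Every surviving column therefore involves only $\hbfp_0,\dots,\hbfp_s$, i.e. only $\hP_s$, and reorganising the corresponding coefficients yields $\hcI_s=\hP_s\hG_s$ with the $\hG_s$ of \eqref{intgram}. I expect this collapse to be the main obstacle, because it is exactly here that the special choice of abscissae enters, through $\bfI_s=\bfo$: it is what annihilates the last column and lets the dimension drop from $\hP_{s+1}$ to the square matrix $\hP_s$.

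Finally I would obtain the nonsingularity of $\cI_{s-1}$ by counting ranks. Since $\hP_s$ is invertible, $\mathrm{rank}\,\hcI_s=\mathrm{rank}\,\hG_s$. The matrix $\hG_s$ has a zero last column, while its first $s$ columns, restricted to rows $2,\dots,s+1$, form a triangular matrix with nonzero diagonal, so $\mathrm{rank}\,\hG_s=s$. On the other hand the block form of $\hcI_s$ in \eqref{intgram} has the same rank as $\cI_{s-1}$, the vanishing top row and last column contributing nothing. Hence $\mathrm{rank}\,\cI_{s-1}=s$, so the $s\times s$ matrix $\cI_{s-1}$ is nonsingular, which completes the proof.
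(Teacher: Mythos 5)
Your proof is correct and follows essentially the same route as the paper's: the same alternant/Vandermonde-type argument for the invertibility of $\hP_s$ (the paper phrases it as the transpose of the Gramian of $P_0,\dots,P_s$ at the distinct nodes), the same appeal to $\bfI_s=\bfo$ for the block structure of $\hcI_s$, and the identical rank count $\mathrm{rank}\,\cI_{s-1}=\mathrm{rank}\,\hcI_s=\mathrm{rank}\,\hG_s=s$. The only difference is that you spell out the factorisation explicitly via \eqref{integ} at index $s+1$, a step the paper's proof leaves implicit.
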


\proof $\hP_s$ is the transpose of the Gramian matrix defined by
the linearly independent polynomials $P_0(c),\dots,P_s(c)$ at the
distinct abscissae $c_0,\dots,c_s$ and is, therefore, nonsingular.
The structure of $\hcI_s$ follows from (\ref{Is}). The matrix
$\cI_{s-1}$ is nonsingular since, from (\ref{intgram}), $\hP_s$ is
nonsingular, and ${\rm rank}(\hG_s)=s$.\QED

\section{Matrix form of collocation methods}\label{colloc}
In this section we deliberately do not care of the exactness of
the discrete line integral, as stated by \eqref{exactness}, and in
fact we choose $k=s$ (and hence $t_i=c_i$, $i=0,\dots,s$). We show
that imposing the vanishing of the discrete line integral
(condition \eqref{intexk}) leads to the definition of the
classical Lobatto IIIA methods. The reason why we consider this
special situation is that the technique that we are going to
exploit is easier to be explained, but at the same time is
straightforwardly generalizable to the case $k>s$. As a
by-product, we will gain more insight about the link between the
new methods and the Lobatto IIIA class. For example, we will
deduce that Lobatto IIIA methods (and, in general, all collocation
methods) may be defined by means of a polynomial $\sigma(t)$ of
degree not larger than that of the collocation polynomial $u(t)$
(indeed, in the present case, $\deg \sigma(t) = \deg u(t)-1$).

To begin with, let us consider the following expansion of
$\sigma'(c)$:
\begin{equation}\label{expan}
\sigma'(c) = \sum_{j=0}^{s-1} \gamma_j P_j(c),
\end{equation}

\no where the  (vector) coefficients $\gamma_j$ are to be
determined. Then, (\ref{intexk}) becomes
\begin{equation}\label{orto0}
\sum_{j=0}^{s-1} \gamma_j^T \sum_{i=0}^s b_i P_j(c_i)\nabla
H(\sigma(c_i))=0,
\end{equation}

\no which will clearly hold true, provided that the following set
of orthogonality conditions are satisfied:
\begin{equation}\label{ortcon}
\gamma_j = \eta_j\sum_{i=0}^s b_i P_j(c_i) J\nabla H
(\sigma(c_i)), \qquad j=0,\dots,s-1,
\end{equation}

\no where $\{\eta_j\}$ are suitable scaling factors. We now impose
that the polynomial
\begin{equation}\label{expan1}
\sigma(c) = y_0 + \sum_{j=0}^{s-1} \gamma_j \int_0^c P_j(x)\,dx
\end{equation}

\no satisfies (\ref{yi}). By setting
\begin{equation}\label{y}
\bfgam=\pmatrix{c}\cc_0\\ \vdots \\ \cc_{s-1}\endpmatrix,
\quad e = \pmatrix{c}1\\ \vdots\\1\endpmatrix\in\RR^s, \quad
\bfy = \pmatrix{c}y_1\\ \vdots\\y_s\endpmatrix, \quad \hbfy =
\pmatrix{c}y_0\\ \bfy\endpmatrix,
\end{equation}

\no one obtains (see (\ref{Ij})--(\ref{integ}))
\begin{equation}\label{inter}
\cI_{s-1}\otimes I_{2m}\bfgam = \left( \frac{1}2
\P_sG_sD_s^{-1}\right)\otimes I_{2m} \bfgam =\bfy -e\otimes y_0.
\end{equation}

\no Consequently,
\begin{equation}\label{gammay}
\bfgam = \left[2D_s (\P_sG_s)^{-1} \pmatrix{cc}-e
&I_s\endpmatrix\right]\otimes I_{2m}\, \hbfy.
\end{equation}

\no On the other hand, the vector form of relations (\ref{ortcon}) reads
\begin{equation}\label{rhs} \bfgam= \left(\Gamma \hP_{s-1}^T
\Omega\right) \otimes I_{2m} \hbff,
\end{equation}

\no where $\Gamma = \diag( \eta_1,\dots,\eta_s)\in\RR^{s\times s}$ and
\begin{equation}\label{ff}\hbff = \pmatrix{ccc}f_0 &\dots&f_s\endpmatrix^T, \qquad
f_i = J\nabla H(\sigma(c_i)),\quad i=0,\dots,s.\end{equation}

\no Since $\Gamma$ contains free parameters, we set
\begin{equation}\label{Gamma}
\Gamma=D_s.
\end{equation}

\no Comparing \eqref{gammay} and \eqref{rhs},  we arrive at the
following  block method, where now $h$ denotes, in general, the
used stepsize,
\begin{equation}\label{AB}
A \otimes I_{2m}\, \hbfy = hB\otimes I_{2m} \,\hbff,
\end{equation}

\no with (see (\ref{integ}))
\begin{equation}\label{ABeq}
A = \pmatrix{cc} -e & I_s\endpmatrix, \qquad B =
\left(\frac{1}2\P_sG_s \hP_{s-1}^T\Omega\right) \equiv
\left(\cI_{s-1} D_s\hP_{s-1}^T\Omega\right).\end{equation}

The following noticeable result holds true.

\begin{theo}\label{ordth} Each row of the block method
(\ref{AB})-(\ref{ABeq}) defines a LMF of order $s+1$. The last row
corresponds to the Lobatto quadrature formula of order $2s$.
\end{theo}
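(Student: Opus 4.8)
The plan is to read each of the $s$ rows of \eqref{AB}--\eqref{ABeq} as a one-step integration formula and to identify its coefficients with interpolatory quadrature weights. Writing $A=(-e\mid I_s)$, the $l$-th row ($l=1,\dots,s$) of the block method reads
\begin{equation*}
y_l - y_0 = h\sum_{i=0}^s B_{l,i}\,f_i, \qquad f_i = J\nabla H(\sigma(c_i)),
\end{equation*}
and from $B=\cI_{s-1}D_s\hP_{s-1}^T\Omega$ its entries are $B_{l,i}=\sum_{j=0}^{s-1}(2j+1)\bigl(\int_0^{c_l}P_j\bigr)P_j(c_i)\,b_i$. To analyze the order I would insert the exact solution $y(t)$ of \eqref{hampro}: then $f_i=J\nabla H(y(c_ih))=y'(c_ih)$, and, setting $g(c)=y'(ch)$, the local truncation error of the $l$-th row becomes $\tau_l = h\bigl(\int_0^{c_l}g(c)\,dc-\sum_{i=0}^s B_{l,i}g(c_i)\bigr)$, since $y(c_lh)-y_0=h\int_0^{c_l}g$. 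Thus everything reduces to understanding the quadrature $\sum_i B_{l,i}g(c_i)$ as an approximation to $\int_0^{c_l}g$.

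The central step is to prove that $\{B_{l,i}\}_{i=0}^s$ are exactly the interpolatory weights on the nodes \eqref{ci} for integration over $[0,c_l]$, i.e.\ $B_{l,i}=\int_0^{c_l}\ell_i(x)\,dx$ with $\ell_i$ the Lagrange basis at $c_0,\dots,c_s$. I would verify that $\sum_{i=0}^s B_{l,i}\,q(c_i)=\int_0^{c_l}q$ for every polynomial $q$ of degree $\le s$: expanding $q=\sum_{j=0}^{s}\hat q_j P_j$ with $\hat q_j=(2j+1)\int_0^1 qP_j$, the $j=s$ term of $\int_0^{c_l}q=\sum_j\hat q_j\int_0^{c_l}P_j$ drops out because $\int_0^{c_l}P_s=(\bfI_s)_l=0$ by \eqref{Is}; meanwhile, since $P_jq$ has degree $\le 2s-1$ and the Lobatto rule $\{b_i\}$ is exact to degree $2s-1$, one has $\sum_i b_iP_j(c_i)q(c_i)=\int_0^1 P_jq=\hat q_j/(2j+1)$ for $j\le s-1$, and substituting into the formula for $B_{l,i}$ gives $\sum_i B_{l,i}q(c_i)=\sum_{j=0}^{s-1}\hat q_j\int_0^{c_l}P_j=\int_0^{c_l}q$. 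Since an $(s+1)$-node rule that is exact on degree $\le s$ is unique, this forces $B_{l,i}=\int_0^{c_l}\ell_i$. I expect this to be the delicate point: the weights $B_{l,i}$ involve only $P_0,\dots,P_{s-1}$, so a naive count gives exactness only up to degree $s-1$, and it is precisely the Lobatto-specific vanishing $\bfI_s=\bfo$ that upgrades exactness to degree $s$.

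With the weights identified, the order claim is immediate. The bracket defining $\tau_l$ vanishes whenever $g$ has degree $\le s$, i.e.\ whenever $y$ is a polynomial of degree $\le s+1$; for a general smooth solution the quadrature error is controlled by $g^{(s+1)}=h^{s+1}y^{(s+2)}(\cdot\,h)$, so $\tau_l=O(h^{s+2})$ and the $l$-th row has order $s+1$. For the last row, $c_s=1$, whence $B_{s,i}=\int_0^1\ell_i=b_i$ are the weights \eqref{bi}; the row therefore reads $y_s-y_0=h\sum_{i=0}^s b_i f_i$, which is exactly the Lobatto quadrature formula applied to $\int_0^1 y'$. Its degree of precision $2s-1$ for the integrand $y'$ raises the order to $2s$, completing the proof.
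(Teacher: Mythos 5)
Your proposal is correct and is essentially the paper's own argument in scalar form: your verification that each row's quadrature $\sum_i B_{l,i}q(c_i)$ reproduces $\int_0^{c_l}q$ for all $\deg q\le s$ is exactly the matrix identity $B\hP_s=\cI_s$ proved in the paper, and it rests on the same two ingredients — the Lobatto exactness of degree $2s-1$ (encoded in (\ref{orto})) and the Lobatto-specific vanishing $\bfI_s=\bfo$ from {\bf P11}, which you correctly flag as the step that upgrades exactness from degree $s-1$ to degree $s$. The only cosmetic difference is in the last row, where you invoke uniqueness of the $(s+1)$-node interpolatory rule to conclude $B_{s,i}=b_i$, whereas the paper computes $e_s^TB$ directly via {\bf P7} and the telescoping structure of $G_s$; both are immediate.
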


\proof For the first part of the proof, it suffices to show that
the method is exact for polynomials of degree $s+1$. Clearly, it
is exact for polynomials of degree 0, due to the form of the
matrix $A$. We shall then prove that $A \hcI_s = B \hP_s$, that is
(see (\ref{Ij}), (\ref{I}), and (\ref{ABeq})), $\cI_s = B \hP_s$.
By virtue of (\ref{ABeq}), (\ref{orto}), and (\ref{Is}), we have
$$B\hP_s = \cI_{s-1}
D_s\hP_{s-1}^T\Omega\hP_s = \cI_{s-1} D_s\left[D_s^{-1}~
\bfo\right] = \left[\cI_{s-1}~\bfI_s\right] = \cI_s,$$ which
completes the first part of the proof. For the second part, one
has to show, by setting as usual $e_i$ the $i$th unit vector, that
$$e_s^TB = \pmatrix{ccc} b_0 & \dots &b_s\endpmatrix,$$

\no the vector containing the coefficients of the quadrature
formula.  From (\ref{ABeq}), exploiting property {\bf P7} (see
also (\ref{G})), we obtain
\begin{eqnarray*}
e_s^T B &=& \frac{1}2 e_s^T \P_s G_s \hP_{s-1}^T\Omega ~=~
\frac{1}2 \pmatrix{ccc} 1&\dots&1\endpmatrix G_s
\hP_{s-1}^T\Omega\\ &=& e_1^T \hP_{s-1}^T \Omega ~=~ \pmatrix{ccc}
1&\dots&1\endpmatrix \Omega = \pmatrix{ccc} b_0 &\dots&
b_s\endpmatrix. \QED
\end{eqnarray*}

As an immediate consequence, the following result follows.

\begin{cor}\label{collcor} The block method
(\ref{AB})-(\ref{ABeq}) collocates at the Lobatto abscissae
(\ref{ci}) and has global order $2s$.
\end{cor}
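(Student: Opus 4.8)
The plan is to show that the block method \eqref{AB}--\eqref{ABeq} is nothing but the Lobatto IIIA collocation scheme, from which both assertions follow at once. The starting point is the identity $\cI_s = B\hP_s$ already established inside the proof of Theorem~\ref{ordth}. Since $\hP_s$ is nonsingular by Lemma~\ref{gram}, this determines $B$ uniquely as $B = \cI_s\hP_s^{-1}$, and the whole task reduces to recognizing the right-hand side as the array of collocation coefficients at the abscissae \eqref{ci}.

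First I would express the Lagrange fundamental polynomials relative to the nodes $c_0,\dots,c_s$ in the shifted Legendre basis. Writing $(\hP_s)_{ij}=P_j(c_i)$, the polynomial $\ell_i$ of degree $s$ determined by $\ell_i(c_l)=\delta_{il}$ has Legendre coefficients given by the $i$-th column of $\hP_s^{-1}$, i.e.\ $\ell_i(t)=\sum_{j=0}^s (\hP_s^{-1})_{ji}P_j(t)$, as one checks directly from $\ell_i(c_l)=\sum_j(\hP_s)_{lj}(\hP_s^{-1})_{ji}=\delta_{li}$. Integrating from $0$ to $c_m$ and recalling that $(\hcI_s)_{mj}=\int_0^{c_m}P_j(x)\,dx$, the full collocation coefficient array is $\hcI_s\hP_s^{-1}$. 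By \eqref{intgram} the first row of $\hcI_s$ vanishes (because $c_0=0$), so the trivial relation $y_0=y_0$ is reproduced, while the remaining $s$ rows are precisely $\cI_s\hP_s^{-1}=B$. Hence \eqref{AB}--\eqref{ABeq} and the collocation scheme share the same coefficient matrix; moreover $\sigma(c_i)=y_i$ by \eqref{yi} gives $f_i=J\nabla H(y_i)$ in both formulations, so the two nonlinear systems for $\hbfy$ coincide and the block method indeed collocates at the Lobatto abscissae \eqref{ci}.

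For the order I would invoke the classical result that a collocation method attains the order equal to that of the degree of precision of its underlying quadrature formula. The second part of Theorem~\ref{ordth} identifies the last row of $B$ with the Lobatto quadrature weights, and that formula has order $2s$; combined with the collocation property just proved, this yields global order $2s$. (Equivalently, one may simply quote that the Lobatto IIIA method on $s+1$ nodes is known to have order $2s$.)

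The main obstacle I anticipate is the bookkeeping in the second step: correctly tracking the change of basis through $\hP_s^{-1}$, matching the index ranges between the $(s+1)\times(s+1)$ array $\hcI_s\hP_s^{-1}$ and the $s\times(s+1)$ matrix $B=\cI_s\hP_s^{-1}$, and verifying that deleting the zero first row of $\hcI_s$ recovers exactly $\cI_s$. Justifying the order $2s$ rigorously from collocation theory, rather than merely citing the known order of Lobatto IIIA, is the other point that requires some care.
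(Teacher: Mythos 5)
Your proposal is correct, and it takes a more explicit route than the paper does. The paper's own proof of this corollary is a one-line appeal to the classical characterization of collocation methods: having shown in Theorem~\ref{ordth} that $A\hcI_s=B\hP_s$ (exactness on polynomials of degree $s+1$, i.e.\ the simplifying assumption $C(s+1)$ for the $s+1$ distinct nodes), it simply cites \cite[Theorem~II.1.5]{HLW} to conclude that the method is the collocation method at \eqref{ci} and inherits order $2s$ from the order-$2s$ Lobatto quadrature by superconvergence. You instead make the identification concrete: from $B\hP_s=\cI_s$ and the nonsingularity of $\hP_s$ (Lemma~\ref{gram}) you get $B=\cI_s\hP_s^{-1}$, and by expanding the Lagrange fundamental polynomials $\ell_i$ in the shifted Legendre basis you recognize $\hcI_s\hP_s^{-1}=\bigl(\int_0^{c_m}\ell_i(t)\,dt\bigr)$ as the textbook collocation coefficient array, whose zero first row (since $c_0=0$) accounts for the trivial equation and whose remaining rows are exactly $B$. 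This is essentially the content of the paper's own alternative derivation in the subsection following the corollary, which reaches the same identification from the opposite direction (expanding the collocation polynomial $u'$ in Legendre polynomials and showing it satisfies \eqref{AB}--\eqref{ABeq}), so your argument is a legitimate, slightly more computational substitute for the citation. What the paper's route buys is brevity and reuse of the already-proved order-$(s+1)$ statement; what yours buys is self-containedness in the identification step, at the cost of the basis-change bookkeeping you correctly flag. For the order claim both arguments ultimately rest on the same external superconvergence theorem, so no gap there either.
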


\proof The proof follows from known results about collocation
methods (see, e.g., \cite[Theorem~II.1.5]{HLW}).\QED

\begin{rem}\label{lobattoIIIA} In conclusion, the method corresponding
to the pencil $(A,B)$, as defined by (\ref{ABeq}), is nothing but the
Lobatto IIIA method of order $2s$.\end{rem}

\subsection{Link between $\sigma(c)$ and the collocation polynomial}

An important consequence of Theorem \ref{ordth} and Corollary
\ref{collcor} is that the Lobatto IIIA method of order $2s$ may be
also defined by means of an underlying polynomial, namely
$\sigma(c)$, of degree $s$  instead of $s+1$, as is the
collocation polynomial associated with the method \eqref{AB}.

The main aim of the present  subsection is to elucidate the
relation between these two polynomials. In what follows, we
deliberately ignore the result obtained in Theorem \ref{ordth} and
Corollary \ref{collcor}, so as to provide, among other things, an
alternative proof of part of the statements they
report.\footnote{\,The approach exploited in the proof of Theorem
\ref{ordth} turns out to be  crucial to deduce the new methods
presented in the next section.}

Let $u(c)$ be the  polynomial (\ref{coll}) (of degree $s+1$) that
collocates problem \eqref{hampro} at the abscissae (\ref{ci}). The
expansion of $u'(c)$ along the shifted Legendre polynomials basis
reads
\begin{equation}
\label{udot}
\displaystyle  u'(c) = \sum_{j=0}^{s} \zeta_j P_{j}(c).
\end{equation}

\no Consequently, by setting $$\hbfg = \pmatrix{c}g_0\\
\vdots\\g_s\endpmatrix, \quad g_i=J\nabla H (u(c_i)), \quad
\mbox{and}\quad \bhzeta \equiv \pmatrix{c} \bfzeta \\
\zeta_s\endpmatrix \equiv \pmatrix{c} \pmatrix{c}\zeta_0\\ \vdots
\\ \zeta_{s-1}\endpmatrix\\[1mm]  \zeta_{s}\endpmatrix,$$

\no one obtains that (\ref{coll}) may be recast in matrix notation
as $\hP_s \otimes I_{2m}  \bhzeta =  \hbfg$, or
\begin{equation}
\label{collocation1} \bhzeta = \hP_s^{-1} \otimes I_{2m}\, \hbfg.
\end{equation}

\no We get the expression of $u(c)$ by integrating both sides of
(\ref{udot}) on the interval $[0,\,c]$:
\begin{equation}
\label{uc}
u(c)=y_0 + \sum_{j=0}^{s-1} \zeta_j \int_0^c P_{j}(x)dx + \zeta_{s} \int_0^c P_{s}(x)dx.
\end{equation}

\no By virtue of property {\bf P11}, we get
\begin{equation}
\label{uci}
u(c_i)=y_0 + \sum_{j=0}^{s-1} \zeta_j \int_0^{c_i} P_{j}(x)dx, \qquad i=0,\dots,s.
\end{equation}

\no Setting $z_i=u(c_i)$,~$i=1,\dots,s$, $\bfz =
(z_1,\dots,z_s)^T$, and $\hbfz = (y_0, \bfz^T)^T$, allows us to
recast (\ref{uci}) in matrix form. This is done by exploiting a
similar argument used to get (\ref{inter}) starting from
(\ref{expan1}):
\begin{eqnarray}\nonumber
 A \otimes I_{2m}\hbfz &=&\bfz - e\otimes y_0 ~=~ {\cal I}_{s-1} \otimes
I_{2m} \bfzeta ~=~ \left(\frac{1}{2} \P_s G_s D_s^{-1}\right)
\otimes I_{2m} \bfzeta\\ &=& \left(\frac{1}{2} \P_s G_s \left[
D_s^{-1} ~~ \bfo \right] \right) \otimes I_{2m} \bhzeta.
\label{uci1}\end{eqnarray}

\no Inserting (\ref{collocation1}) into (\ref{uci1}), and
exploiting (\ref{orto}), yields
\begin{eqnarray*}
\lefteqn{A \otimes I_{2m}\hbfz}\\&=& \frac{1}{2} \P_s G_s \left[
D_s^{-1} ~~ \bfo \right] \hP_s^{-1}  \otimes I_{2m}  \hbfg =
\frac{1}{2} \P_s G_s \hP_{s-1}^{T} \Omega \otimes I_{2m}   \hbfg
\\&=& B \otimes I_{2m} \hbfg.
\end{eqnarray*}

\no Thus, the collocation problem  (\ref{coll}) defines the very
same method arising from the polynomial $\sigma(c)$ (see
(\ref{AB})--(\ref{ABeq})) with $h=1$. This implies that system
(\ref{AB}) is a collocation method defined on the Lobatto
abscissae $c_i$, $i=0,\dots,s$, (therefore, a Lobatto IIIA
method), and provides an alternative proof of Corollary
\ref{collcor}. In particular, we deduce that $$u(c_i) = y_i=
\sigma(c_i),\qquad i=0,\dots,s.$$

\no It follows that (\ref{uc}) becomes
\begin{equation}\label{usig}
u(c)=\sigma(c) + \zeta_{s} \int_0^c P_{s}(x)dx
\end{equation}

\no and, after differentiating,
\begin{equation}
\label{usigma}
u'(c)=\sigma'(c) + \zeta_{s} P_{s}(c).
\end{equation}

\no We can obtain the expression of the unknown $\zeta_{s}$ by
imposing a collocation condition at any of the abscissae $c_i$.
For example, choosing $c=c_s=1$, yields
\begin{equation}\label{zs}
\zeta_{s} = u'(1) - \sigma'(1) = f(y_s)-\sum_{j=0}^{s-1} \gamma_j
= f(y_s) - { e}^T \otimes I_{2m} \bfgam.
\end{equation}

\no This latter expression can be slightly simplified by
considering that:
\begin{itemize}
\item[(i)] $f(y_s)=(e^T \, 1)\hP_{s}^{-1} \otimes I_{2m} \hbff$,
which comes from the fact that the system  $\hP_{s}^T x =
\pmatrix{c} e
\\ 1 \endpmatrix$ has solution $x = e_{s+1}$ (the
nonsingularity of $\hP_{s}$ being assured by Lemma~\ref{gram});

\item[(ii)] from (\ref{gammay}) and (\ref{AB})--(\ref{ABeq}), one has
\begin{eqnarray*}\bfgam &=& (D_s \hP_{s-1}^T \Omega) \otimes I_{2m} \hbff~=~
(D_s
\hP_{s-1}^T \Omega \hP_{s} \hP_{s}^{-1}) \otimes I_{2m} \hbff\\
&=& (D_s (D_s^{-1}~ \bfo) \hP_{s}^{-1}) \otimes I_{2m} \hbff~=~
(I_s~ \bfo) \hP_{s}^{-1} \otimes I_{2m} \hbff.\end{eqnarray*}
\end{itemize}

\no Thus, from (\ref{zs}) we get
\begin{equation}
\label{zeta_last} \zeta_{s} = \left[(e^T~1) -(e^T~0)\right]
\hP_{s}^{-1} \otimes I_{2m} \hbff = e_{s+1}^T \hP_{s}^{-1} \otimes
I_{2m} \hbff.
\end{equation}

\no The remaining collocation conditions, $u'(c_i)=J\nabla
H(u(c_i)), i=0,\dots,s-1$, are clearly satisfied since the
collocation polynomial $u(c)$ is uniquely identified by the $s+2$
linearly independent conditions in (\ref{coll}). Nonetheless, they
can be easily checked after observing that, from (\ref{usig}),
(ii), and (\ref{zeta_last}),
$$
\bhzeta= \left( \begin{array}{c} \bfgam \\ \zeta_{s} \end{array}
\right) = \hP_{s}^{-1} \otimes I_{2m} \hbff.
$$

\no Therefore, from (\ref{udot}), (\ref{collocation1}), and
(\ref{usig}), one obtains,
$$
\hbfu' \equiv \left( \begin{array}{c} u'(c_0) \\ \vdots \\ u'(c_s)
\end{array} \right) = \hP_{s} \otimes I_{2m} \bfzeta = \hP_{s}\hP_{s}^{-1}
\otimes I_{2m}  \hbff = \hbff.
$$

\no That is (see (\ref{ff})),~  $u'(c_i) = J \nabla H(u(c_i))$,~
$i=0,\dots,s$.

\section{Derivation of the methods}\label{derive}
The arguments for deriving the methods in the general case where
$k\ge s$ (which makes the discrete line integral exact) are a
straightforward  extension of what stated above. In particular,
let us consider again the expansion (\ref{expan})--(\ref{expan1})
of the polynomial $\sigma(c)$. Then, condition (\ref{intexk}) can
be recast as (compare with (\ref{orto0}))
\begin{equation}\label{orto1}
\sum_{j=0}^{s-1} \gamma_j^T \sum_{i=0}^k b_i P_j(t_i)\nabla
H(\sigma(t_i))=0,
\end{equation}

\no which will clearly hold true, provided that the following set
of orthogonality conditions are satisfied (compare with
(\ref{ortcon}), see also (\ref{bik})):
\begin{equation}\label{ortcon1}
\gamma_j = \eta_j\sum_{i=0}^k b_i P_j(t_i) J\nabla H
(\sigma(t_i)), \qquad j=0,\dots,s-1,
\end{equation}

\no where $\{\eta_j\}$ are suitable scaling factors. According to
(\ref{Gamma}), we choose them as $\eta_j=2j+1$, $j=0,\dots,s-1$.
The vector $\bfgam$ (see (\ref{y})) is then obtained by imposing
that the polynomial $\sigma(c)$ in (\ref{expan1}) satisfies the
interpolation constrains (\ref{yi}) and (\ref{hyi}). In so doing,
one obtains a block method characterized by the pencil $(A,B)$,
where the two $k\times k+1$ matrices $A$ and $B$ are defined as
follows. In order to simplify the notation, we shall use a
``Matlab-like'' notation: let  $\ind_s\in\RR^{s+1}$ and
$\ind_r\in\RR^r$ be the vectors whose entries are the indexes of
the main abscissae $c_0<\dots<c_s$ in (\ref{ci}) and of the silent
ones $\hc_1<\dots<\hc_r$ in (\ref{hci}), respectively, within the
Lobatto abscissae $t_0<\dots<t_k$, as defined in (\ref{ti}). Then,
the orthogonality conditions (\ref{ortcon1}) will define the first
$s$ rows of $A$ and $B$\,\footnote{\,As a further convention, the
entries not explicitly set are assumed to be 0.} (compare with
(\ref{ABeq})):
\begin{equation}\label{hbvmks}
A(1:s,\ind_s) = \pmatrix{lr} -e & I_s\endpmatrix, \qquad B(1:s,:)
= \left(\cI_{s-1} D_s\bar\P^T\bar\Omega\right),\end{equation}

\no where (see (\ref{I})--(\ref{DOm}) and (\ref{ti}))
\begin{equation}\label{bps1}
\bar\P = \pmatrix{ccc} P_0(t_0)& \dots &P_{s-1}(t_0)\\
\vdots & &\vdots\\ P_0(t_k)&\dots &P_{s-1}(t_k)\endpmatrix\in
\RR^{k+1\times s},\end{equation}

\no and (see (\ref{bi})) \begin{equation}\label{barOm}
\bar\Omega = \pmatrix{ccc} b_0\\ &\ddots\\
&&b_k\endpmatrix\in\RR^{k+1\times k+1}.\end{equation}

\no On the other hand, the interpolation conditions for the silent
stages (\ref{hyi}) define the last $r$ rows of the matrix $A$ (the
corresponding rows of $B$ are obviously zero):
\begin{eqnarray}\label{hbvmks1}
A(s+1:k,\ind_r) &=& I_r, \\ \nonumber
A(s+1:k,\ind_s) &=& -\bar\cI_r\left[\cI_{s-1}^{-1}\pmatrix{lr} -e &
I_s\endpmatrix\right] - \bar{e}\cdot e_1^T,
\end{eqnarray}

\no where $I_r$ is the identity matrix of dimension $r$, $\bar{e} =
(1,\dots,1)^T\in\RR^r$, $e_1$ is the first unit vector (of dimension $s+1$), and
$$\bar\cI_r = \pmatrix{ccc} \int_0^{\hc_1}P_0(x)dx &\dots
&\int_0^{\hc_1}P_{s-1}(x)dx\\
\vdots& & \vdots\\ \int_0^{\hc_r}P_0(x)dx &\dots
&\int_0^{\hc_r}P_{s-1}(x)dx\endpmatrix\in\RR^{r\times s}.$$

The following result generalizes Theorem~\ref{ordth} to the
present setting  (the proof being similar).

\begin{theo}\label{ordth1} Each row of the block method
(\ref{hbvmks})--(\ref{hbvmks1}) defines a LMF of order at least
$s$. The $s$-th row corresponds to the Lobatto quadrature formula
of order $2k$.
\end{theo}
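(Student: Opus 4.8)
The plan is to mirror the proof of Theorem~\ref{ordth}, working with the scalar pencil $(A,B)$ (the tensoring with $I_{2m}$ is irrelevant for the order analysis). A row of the block method defines a LMF $\sum_i A_{\ell i}y_i = h\sum_i B_{\ell i}f_i$ of order $p$ precisely when it is exact, with $f=y'$ and $h=1$, on every polynomial of degree $\le p$ sampled at the Lobatto nodes $t_0,\dots,t_k$. A convenient basis for degree $\le s$ is $\{1,\int_0^c P_0,\dots,\int_0^c P_{s-1}\}$, whose members have degrees $0,1,\dots,s$ and derivatives $P_0,\dots,P_{s-1}$. Writing $\bar\cI$ for the $(k{+}1)\times s$ matrix with entries $\int_0^{t_i}P_{j-1}(x)\,dx$ (so that the $j$-th columns of $\bar\cI$ and of $\bar\P$ carry the nodal values of $\int_0^c P_{j-1}$ and of its derivative), exactness up to degree $s$ reduces to two identities: $A\mathbf 1=\bfo$ (constants) and $A\bar\cI = B\bar\P$. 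I would verify these blockwise, treating the $s$ orthogonality rows and the $r$ silent-stage rows of \eqref{hbvmks}--\eqref{hbvmks1} separately. The crucial observation, used repeatedly, is that the rows of $\bar\cI$ indexed by $\ind_s$ form $\hcI_{s-1}$ (whose first row vanishes by \eqref{Ij}) while the rows indexed by $\ind_r$ form $\bar\cI_r$.

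For the first $s$ rows, $A(1:s,\cdot)$ is supported on the columns $\ind_s$, where it equals $[-e~I_s]$. Hence $A(1:s,:)\bar\cI = [-e~I_s]\hcI_{s-1} = \cI_{s-1}$ (the zero first row of $\hcI_{s-1}$ absorbs the $-e$ term, exactly as $A\hcI_s=\cI_s$ in Theorem~\ref{ordth}), and $A(1:s,:)\mathbf 1=\bfo$ follows from the row-sum $[-e~I_s]\mathbf 1=\bfo$. It then remains to check $B(1:s,:)\bar\P = \cI_{s-1}D_s(\bar\P^T\bar\Omega\bar\P)=\cI_{s-1}$, i.e. $\bar\P^T\bar\Omega\bar\P = D_s^{-1}$. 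This is the heart of the matter: the entry $(\bar\P^T\bar\Omega\bar\P)_{j\ell}=\sum_{i=0}^k b_i P_{j-1}(t_i)P_{\ell-1}(t_i)$ is the $(k{+}1)$-point Lobatto quadrature applied to $P_{j-1}P_{\ell-1}$, a polynomial of degree $\le 2s-2\le 2k-1$. Since $k\ge s$, this lies within the degree of exactness $2k-1$ of the enlarged formula, so by orthogonality of the shifted Legendre polynomials (property {\bf P3}) the sum equals $\int_0^1 P_{j-1}P_{\ell-1}=\delta_{j\ell}/(2j-1)$, giving $\bar\P^T\bar\Omega\bar\P=D_s^{-1}$ and hence exactness to degree $s$.

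For the last $r$ rows, $B(s+1:k,:)=0$, so I must show $A(s+1:k,:)\bar\cI=\bfo$ and $A(s+1:k,:)\mathbf 1=\bfo$. Using $A(s+1:k,\ind_r)=I_r$ together with the defining expression for $A(s+1:k,\ind_s)$ and the block decomposition of $\bar\cI$, I compute $A(s+1:k,:)\bar\cI = \bar\cI_r - \bar\cI_r\cI_{s-1}^{-1}\bigl([-e~I_s]\hcI_{s-1}\bigr) - \bar e\,(e_1^T\hcI_{s-1})$. Now $[-e~I_s]\hcI_{s-1}=\cI_{s-1}$ (just shown) and $e_1^T\hcI_{s-1}=\bfo^T$ (the first row of $\hcI_{s-1}$ vanishes, by \eqref{Ij}), so the three terms telescope to $\bar\cI_r-\bar\cI_r-\bfo=\bfo$; the constant identity collapses analogously to $\bar e+\bfo-\bar e=\bfo$. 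This establishes that \emph{every} row is exact up to degree $s$, proving the order-$\ge s$ claim. (Conceptually, these rows merely impose $\hy_i=\sigma(\hc_i)$, and interpolation of a degree-$\le s$ polynomial at the $s{+}1$ main nodes is exact, which is why their local error vanishes to this order.)

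For the second statement, I single out the $s$-th row: its $A$-part is the last row of $[-e~I_s]$ on columns $\ind_s$, i.e. $-1$ at $t_0=c_0$ and $+1$ at $t_k=c_s=1$, so the LMF reads $y_k-y_0=\sum_i\bigl(e_s^TB(1:s,:)\bigr)_i f_i$. As in the second half of Theorem~\ref{ordth}, I use $\cI_{s-1}=\frac12\P_sG_sD_s^{-1}$ from \eqref{integ} to get $e_s^T\cI_{s-1}D_s=\frac12 e_s^T\P_sG_s=\frac12\mathbf 1^TG_s=e_1^T$, where $e_s^T\P_s=\mathbf 1^T$ because $P_j(1)=1$ and $\frac12\mathbf 1^TG_s=e_1^T$ is the same reduction used there (property {\bf P7}, see also \eqref{G}). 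Therefore $e_s^TB(1:s,:)=e_1^T\bar\P^T\bar\Omega=\mathbf 1^T\bar\Omega=(b_0,\dots,b_k)$, since $P_0\equiv 1$ makes the first row of $\bar\P^T$ the all-ones vector. Thus the $s$-th row is exactly $y_k-y_0=\sum_{i=0}^k b_i f_i$, the $(k{+}1)$-point Lobatto formula for $\int_0^1 y'$, which has order $2k$. I expect the main obstacle to be purely bookkeeping—keeping the splitting $\{t_i\}=\{c_i\}\cup\{\hc_i\}$ consistent so that the row-supports and the block restrictions of $\bar\cI$ align with $\hcI_{s-1}$ and $\bar\cI_r$—together with the one genuinely substantive point, the degree count $2s-2\le 2k-1$ behind $\bar\P^T\bar\Omega\bar\P=D_s^{-1}$, which is exactly where the enlarged Lobatto quadrature and the choice \eqref{kappa} of $k$ enter and upgrade the argument beyond the collocation case $k=s$ of Theorem~\ref{ordth}.
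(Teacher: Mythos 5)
Your proposal is correct and follows exactly the route the paper intends: the paper omits the proof of this theorem, stating only that it is ``similar'' to that of Theorem~\ref{ordth}, and your argument is precisely that generalization, with the two genuinely new ingredients correctly identified and verified --- the identity $\bar\P^T\bar\Omega\,\bar\P=D_s^{-1}$ (the analogue of (\ref{orto}), valid because $2s-2\le 2k-1$ lies within the exactness degree of the enlarged Lobatto formula) and the telescoping cancellation showing that the silent-stage rows (\ref{hbvmks1}) are exact to degree $s$. No gaps.
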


\begin{defn} We call the method defined by the pencil $(A,B)$ in
(\ref{hbvmks})--(\ref{hbvmks1}) a {\em ``Hamiltonian BVM with $k$
steps and degree $s$''}, hereafter {\em
HBVM($k$,$s$)}.\footnote{\,Indeed, the pencil $(A,B)$ perfectly
fits the framework of block BVMs (see, e.g., \cite{BT1}).}
\end{defn}

\begin{rem}\label{snotk}
The structure of the nonlinear system associated with the
HBVM$(k,s)$ is better visualized by performing a permutation of
the stages that splits, into two block sub-vectors, the
fundamental stages and the silent ones. More precisely, the
permuted vector of stages, say $\bfz$, is required to be:
$$\begin{array}{l}
\bfz =
[\underbrace{y_0^T,y_1^T,\dots,y_s^T},\underbrace{w_1^T,w_2^T,\dots,w_r^T}]^T
\equiv [y_0^T, \bfy^T,\bfw^T]^T. \\
\hspace*{.5cm} \mbox{\footnotesize fundamental stages} \hspace*{.5cm}
\mbox{\footnotesize silent stages}
\end{array}
$$
This is accomplished by introducing the permutation matrices $W\in\RR^{k\times
k}$ and\, $W_1\in\RR^{k+1\times k+1}$, such that $$W\pmatrix{c}2\\ \vdots\\
k+1\endpmatrix = \pmatrix{c} \ind_s(2:s+1)\\ \ind_r\endpmatrix,\qquad
W_1\pmatrix{c}1\\ \vdots\\ k+1\endpmatrix =
\pmatrix{c} \ind_s\\ \ind_r\endpmatrix.$$
It is easy to realize that
$$W\,A\,W_1^T = \pmatrix{ccc} -e &I_s &0_{s\times r}\\  -a_0 &-A_1
&I_r\endpmatrix,
\qquad W\,B\,W_1^T = \pmatrix{ccc} b_0 &B_1 &B_2\\ \bf0 & 0_{r\times s} & 0_{r
\times r}\endpmatrix,$$
where  $[-a_0, \, -A_1]$ coincides with $A(s+1:k,\ind_s)$ in \eqref{hbvmks1},
while $[b_0, \, B_1, \, B_2]$ matches the matrix $B(1:s,:)$ in \eqref{hbvmks}.
The HBVM$(k,s)$ then takes  the form:
\begin{equation}
\label{hbvm_sep}
\pmatrix{ccc} -e &I_s &0_{s\times r}\\  -a_0 &-A_1 &I_r\endpmatrix \otimes
I_{2m}\, \bfz = h \pmatrix{ccc} b_0 &B_1 &B_2\\ \bf0 & 0_{r\times s} & 0_{r
\times r}\endpmatrix \otimes J\, \nabla H(\bfz).
\end{equation}
The presence of the null blocks in the lower part of $W\,B\,W_1^T$ clearly
suggests that the (generally nonlinear) system \eqref{hbvm_sep} of (block) size
$k$ is actually equivalent to a system having (block) size $s$. Indeed, we can
easily remove the silent stages, $$\bfw = a_0 \otimes y_0+A_1\otimes I_{2
m}\,\bfy,$$ and obtain
\begin{eqnarray}
\label{hbvm_final}
\bfy &=& e \otimes y_0 +h b_0 \otimes (J \nabla H(y_0))+h B_1 \otimes J\,
\nabla H(\bfy)\\ \nonumber
&&  \qquad \qquad + h B_2 \otimes J\, \nabla H(a_0 \otimes y_0+A_1\otimes
I_{2m}\,\bfy).
\end{eqnarray}
We refer to \cite{BIS} for an alternative technique to reduce the dimension of system \eqref{hbvm_sep}.
\end{rem}

\begin{rem}\label{RK} As was shown in the previous section, when $k=s$, the
HBVM($s$,$s$) coincides with the Lobatto IIIA method of order $2s$.
More in general, for $k\ge s$, by summing up (\ref{hbvmks})--(\ref{hbvmks1}), we
can cast HBVM$(k,s)$ as a Runge-Kutta method with the following tableau:
\begin{equation}\label{tableau}
\begin{array}{c|c}
t_0    &\\
\vdots &  \bar\cI D_s \bar\P^T\bar\Omega\\
t_k    &\\
\hline
       & b_0\quad\dots\quad b_k\end{array}
\end{equation}
where $$\bar\cI = \pmatrix{ccc} \int_0^{t_0}P_0(x)dx &\dots
&\int_0^{t_0}P_{s-1}(x)dx\\
\vdots& & \vdots\\ \int_0^{t_k}P_0(x)dx &\dots
&\int_0^{t_k}P_{s-1}(x)dx\endpmatrix\in\RR^{k+1\times s}.$$ We
observe that the $k+1\times k+1$ matrix
\begin{equation}\label{C}
C = \bar\cI D_s \bar\P^T\bar\Omega \end{equation} appearing in
(\ref{tableau}) has rank $s$, thus confirming that the
computational cost per iteration depends on $s$, rather than on
$k$ (see \cite{BIS} for more details and a practical example of
Butcher tableau concerning the method HBVM(6,2)).

By the way, we observe that, when $s=1$, HBVM$(k,1)$ are nothing
but the ``$s$-stage trapezoidal methods'', defined in \cite{IP1},
based on the Lobatto abcissae. In such a case, the matrix $C$
becomes
$$C = \pmatrix{c}t_0\\ \vdots \\ t_k\endpmatrix \pmatrix{ccc} b_0 &
\dots &b_k\endpmatrix.$$ Similarly, for $s=2$ and $k=4$, HBVM(4,2)
coincides with the fourth-order method presented in
\cite[Section\,4.2]{IT2}, able to preserve polynomial Hamiltonians
of degree four.
\end{rem}

Concerning the order of convergence, the following result generalizes that of
Corollary~\ref{collcor}.

\begin{cor}\label{ordine}
The HBVM($k$,$s$) (\ref{hbvmks})--(\ref{hbvmks1}) has order of
convergence  $p=2s$.
\end{cor}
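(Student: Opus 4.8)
The plan is to discard the block/BVM viewpoint and work with the Runge--Kutta formulation of Remark~\ref{RK}, whose coefficient matrix is $C=\bar\cI D_s\bar\P^T\bar\Omega$ as in \eqref{C} and \eqref{tableau}. For such an $(k+1)$-stage tableau $(t_i,C_{ij},b_i)$ I would verify the three classical simplifying assumptions and then invoke the standard order theorem (see \cite{HLW}): with $B(p)$ the exactness of the quadrature $(b_i,t_i)$ on polynomials of degree $\le p-1$, and
\[
C(\eta):\quad \sum_{j=0}^k C_{ij}\,t_j^{\,l-1}=\frac{t_i^{\,l}}{l}\ \ (1\le l\le\eta),\qquad
D(\zeta):\quad \sum_{i=0}^k b_i\,t_i^{\,l-1}C_{ij}=\frac{b_j}{l}\bigl(1-t_j^{\,l}\bigr)\ \ (1\le l\le\zeta),
\]
the combination $B(2s)$, $C(s)$, $D(s-1)$ gives order $2s$, since $2s\le 2s+2$ and $2s\le s+(s-1)+1$. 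The unifying theme is that the generating polynomial $\sigma$ has degree $s$, so only $P_0,\dots,P_{s-1}$ occur, while the Lobatto rule on $t_0<\dots<t_k$ has precision $2k-1$; the whole argument works precisely because $k\ge s$ makes $2k-1\ge 2s-1$.

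First, $B(2s)$ is essentially free: by Theorem~\ref{ordth1} the $s$-th row is the $(k+1)$-point Lobatto formula, of order $2k\ge 2s$, so the weights $(b_i)$ integrate exactly every polynomial of degree $\le 2s-1$. Next, for $C(s)$ I would test on $t^m$ with $m\le s-1$. Using $C=\bar\cI D_s\bar\P^T\bar\Omega$, the relevant inner sums are $\sum_i b_i P_l(t_i)t_i^{\,m}$, i.e.\ Lobatto quadratures of the products $P_l(t)\,t^m$, of degree $\le(s-1)+(s-1)=2s-2\le 2k-1$, hence computed exactly; the $L^2$-orthogonality $\int_0^1 P_l P_{l'}\,dt=\delta_{ll'}/(2l+1)$ then collapses $\sum_j C_{ij}t_j^{\,m}$ to $\sum_l(\bar\cI)_{il}\alpha_{ml}=\int_0^{t_i}t^m\,dt$, where $t^m=\sum_{l\le m}\alpha_{ml}P_l$ is the (finite, since $m\le s-1$) Legendre expansion.

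For $D(s-1)$ I would expand $\sum_i b_i t_i^{\,l-1}C_{ij}=b_j\sum_{p=0}^{s-1}(2p+1)P_p(t_j)\sum_i b_i t_i^{\,l-1}(\bar\cI)_{ip}$. The innermost sum is again an exactly integrated quadrature (the integrand $t^{l-1}\!\int_0^t P_p$ has degree $p+l\le 2s-2\le 2k-1$), so integration by parts reduces it to $\tfrac1l\bigl(\int_0^1 P_p-\int_0^1 t^l P_p\bigr)$. Feeding this back and truncating the Legendre series of $t^l$ at $P_{s-1}$ — legitimate exactly because $l\le s-1$ — reconstructs $t_j^{\,l}$ from $\sum_{p\le s-1}(2p+1)P_p(t_j)\int_0^1 t^l P_p\,dt$ and yields $\tfrac{b_j}{l}(1-t_j^{\,l})$, which is $D(s-1)$.

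The main obstacle is the simultaneous bookkeeping in $C(s)$ and, especially, $D(s-1)$: one must match the \emph{algebraic} truncation of the Legendre expansions at degree $s-1$ against the \emph{numerical} precision of the $(k+1)$-point Lobatto rule, and confirm that $k\ge s$ is exactly the threshold at which every integrand that appears (degree $\le 2s-2$, resp.\ $\le 2s-1$) is integrated without error. Once these are in place the order theorem gives at least $2s$, and the value is exactly $2s$ in agreement with the limiting case $k=s$ (Corollary~\ref{collcor}, the Lobatto IIIA method). As a cross-check, and an alternative route, I would regard HBVM$(k,s)$ as a quadrature perturbation of that same Lobatto IIIA method, whose defect $\sigma'-hJ\nabla H(\sigma)$ is (discretely, in the sense of the $(k+1)$-point rule) orthogonal to polynomials of degree $\le s-1$; an \cite{HLW}-type superconvergence argument then delivers the identical local error $O(h^{2s+1})$, explaining why refining the quadrature beyond precision $2s-1$ preserves—rather than raises—the order.
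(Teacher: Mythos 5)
Your proposal is correct and follows essentially the same route as the paper: establish the simplifying assumptions $B(\cdot)$, $C(s)$ and $D(s-1)$ for the Runge--Kutta tableau \eqref{tableau} and invoke Butcher's classical order theorem, with $D(s-1)$ verified exactly as in the paper (Lobatto quadrature exactness on integrands of degree $\le 2s-2\le 2k-1$, integration by parts, and the fact that the Legendre expansion of $t^{l}$ truncated at $P_{s-1}$ is exact for $l\le s-1$). The only cosmetic difference is that you re-derive $C(s)$ directly from orthogonality, whereas the paper obtains $B(2k)$ and $C(s)$ by citing Theorem~\ref{ordth1}.
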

\proof By virtue of Theorem~\ref{ordth1}, the corresponding Runge-Kutta method
(\ref{tableau}) satisfies the usual simplifying assumptions $B(2k)$ and $C(s)$.
If we are able to prove $D(s-1)$, from the classical result of Buthcher (see,
e.g., \cite[Theorem\,5.1]{HW}), it will follow that the method has order $p=2s$.
With reference to (\ref{tableau}), the condition $D(s-1)$ can be cast in matrix
form, by introducing the
vectors $e=(1,\dots,1)^T\in\RR^{s-1}$, $\bar{e}=(1,\dots,1)^T\in\RR^{k+1}$, and
the matrices $$Q=\diag(1,\dots,s-1),\qquad T=\diag(t_0,\dots,t_k),\qquad
V=(t_{i-1}^{j-1})\in\RR^{k+1\times s-1},$$ as
$$Q V^T\bar\Omega\left(\bar\cI D_s \bar\P^T\bar\Omega\right) =
\left(e\,\bar{e}^T -V^TT\right)\bar\Omega,$$ i.e.,

\begin{equation}\label{finito}
\bar\P D_s\bar\cI^T\bar\Omega V Q = \left(\bar{e}\,e^T -TV\right).
\end{equation}
Since the quadrature is exact for polynomials of degree $2s-1 \le 2k-1$, one has

\begin{eqnarray*}
\left(\bar\cI^T\bar\Omega VQ\right)_{ij} &=& \left( \sum_{\ell=0}^k b_\ell
\int_0^{t_\ell} P_{i-1}(x)\mathrm{d}x\,(j t_\ell^{j-1}) \right) =
\left(\int_0^1 \, \int_0^t P_{i-1}(x)\mathrm{d}x
(jt^{j-1})\mathrm{d}t\right)
\\&=& \left( \delta_{i1}-\int_0^1P_{i-1}(x)x^j\mathrm{d}x\right),
\qquad i = 1,\dots,s,\quad j=1,\dots,s-1,\end{eqnarray*} where the
last equality is obtained by integrating by parts, with
$\delta_{i1}$ the Kronecker symbol. Consequently,
\begin{eqnarray*}
\left(\bar\P D_s\bar\cI^T \bar\Omega V Q\right)_{ij} &=& \left(1 -
\sum_{\ell=0}^{s-1} \eta_\ell P_\ell(t_i)\int_0^1 P_\ell(x)
x^j\mathrm{d}x \right) = (1-t_{i-1}^j),\\&& \qquad
\qquad
i=1,\dots,k+1,\quad j=1,\dots,s-1,\end{eqnarray*} that is,
(\ref{finito}), where the last equality follows from the fact that
$$\sum_{\ell=0}^{s-1} \eta_\ell P_\ell(t)\int_0^1 P_\ell(x)
x^j\mathrm{d}x = t^j, \qquad j=1,\dots,s-1.\QED$$ \medskip

An additional, remarkable property of such methods is gained,
provided that the abscissae $\{t_0,\dots,t_k\}$ (\ref{ti}) are
symmetrically distributed (as is the case of the Lobatto abscissae
here considered). For this purpose, we need to introduce some
notations and preliminary results. Let us define the matrix
$$E_n =
\pmatrix{ccccc}&&&&1\\&&&\cdot\\&&\cdot\\&\cdot\\1\endpmatrix\in\RR^{n\times
n},$$

\no which, when applied to a vector of length $n$, reverses the
order of its entries. We also set \begin{equation}\label{LF}
L = \pmatrix{rrrr} -1&1\\
&\ddots&\ddots\\&&-1&1\endpmatrix\in \RR^{k\times k+1}, \quad F = \pmatrix{cccc} (-1)^0 \\
&(-1)^1\\&&\ddots\\&&&(-1)^{s-1}\endpmatrix\in \RR^{s\times
s}.\end{equation}

\no The following preliminary result holds true.

\begin{lem}\label{sim1} If the abscissae (\ref{ti}) are symmetric,
then matrix (\ref{C}) satisfies:
$$E_k L\, C \, E_{k+1} = L\,C.$$
\end{lem}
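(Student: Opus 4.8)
The plan is to exploit the parity of the shifted Legendre polynomials, namely $P_j(1-x)=(-1)^jP_j(x)$, together with the symmetry of the nodes, $t_{k-i}=1-t_i$, and of the associated Lobatto weights, $b_i=b_{k-i}$ (the latter being a standard consequence of the former: substituting $t\mapsto1-t$ in the interpolatory weight $b_i=\int_0^1\prod_{j\ne i}\frac{t-t_j}{t_i-t_j}\,dt$ and relabelling $j\mapsto k-j$ gives $b_{k-i}=b_i$). These facts produce a set of \emph{reflection} identities for the three factors of $C=\bar\cI D_s\bar\P^T\bar\Omega$, from which the claim follows by pure matrix algebra.

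First I would record how left-multiplication by the reversal matrix acts on each factor. Reversing the rows of $\bar\P$ replaces $t_{i-1}$ by $t_{k+1-i}=1-t_{i-1}$, so the parity relation gives $E_{k+1}\bar\P=\bar\P F$, and hence $\bar\P^TE_{k+1}=F\bar\P^T$ (here $F$ is symmetric). Since $\bar\Omega$ is diagonal with a palindromic diagonal, $E_{k+1}\bar\Omega E_{k+1}=\bar\Omega$, i.e.\ $\bar\Omega E_{k+1}=E_{k+1}\bar\Omega$. The only delicate factor is $\bar\cI$: substituting $x\mapsto1-x$ in $\int_0^{1-t}P_{j-1}(x)\,dx$ and using $\int_0^1P_{j-1}=\delta_{j,1}$ (orthogonality to $P_0\equiv1$) yields the affine identity $E_{k+1}\bar\cI=(\bar e\,e_1^T-\bar\cI)F$, where the inhomogeneous term $\bar e\,e_1^T$ comes precisely from the nonzero integral of $P_0$.

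Next I would assemble these into a reflection formula for $C$. Writing $E_{k+1}CE_{k+1}=(E_{k+1}\bar\cI)D_s\bar\P^T(\bar\Omega E_{k+1})$ and pushing the reversals inward gives $(\bar e\,e_1^T-\bar\cI)FD_sF\bar\P^T\bar\Omega$; because $F$ and $D_s$ are diagonal and $F^2=I$ we have $FD_sF=D_s$, so this collapses to $(\bar e\,e_1^T-\bar\cI)D_s\bar\P^T\bar\Omega=\bar e\,(e_1^TD_s\bar\P^T\bar\Omega)-C$. Evaluating the bracket, $e_1^TD_s=e_1^T$ (the first diagonal entry of $D_s$ is $1$) and $e_1^T\bar\P^T=\bar e^T$ (the first column of $\bar\P$ is the all-ones vector, since $P_0\equiv1$), whence $e_1^TD_s\bar\P^T\bar\Omega=(b_0,\dots,b_k)$. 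This establishes the compact identity $E_{k+1}CE_{k+1}=\bar e\,(b_0,\dots,b_k)-C$.

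Finally I would combine this with the reflection of $L$. A direct index computation gives $E_kLE_{k+1}=-L$, equivalently $E_kL=-LE_{k+1}$. Therefore $E_kLCE_{k+1}=-LE_{k+1}CE_{k+1}=-L\big(\bar e\,(b_0,\dots,b_k)-C\big)=-(L\bar e)(b_0,\dots,b_k)+LC$, and since every row of $L$ sums to zero we have $L\bar e=\bfo$, leaving exactly $E_kLCE_{k+1}=LC$. The step deserving the most care, and the main obstacle, is the $\bar\cI$ reflection: one must correctly track the inhomogeneous shift $\bar e\,e_1^T$ together with the parity sign absorbed into $F$, and then verify that this shift is annihilated at the very end by $L\bar e=\bfo$; everything else is bookkeeping with the diagonal matrices $F$, $D_s$ and $\bar\Omega$.
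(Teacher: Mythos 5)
Your proof is correct and follows essentially the same route as the paper's: both factor $C=\bar\cI D_s\bar\P^T\bar\Omega$, use the Legendre parity {\bf P6} to get $\bar\P^TE_{k+1}=F\,\bar\P^T$, the palindromic Lobatto weights to get $E_{k+1}\bar\Omega E_{k+1}=\bar\Omega$, and the cancellation $FD_sF=D_s$ together with $E_kLE_{k+1}=-L$. The only (harmless) difference lies in the integral factor: the paper premultiplies by $L$ first, so that $L\bar\cI$ collects integrals over consecutive subintervals $[t_{i-1},t_i]$ and reflects homogeneously as $E_kL\bar\cI=L\bar\cI F$, whereas you reflect $\bar\cI$ itself, pick up the rank-one shift $\bar e\,e_1^T$, and correctly annihilate it at the end via $L\bar e=\bfo$.
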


\proof From the symmetry of the abscissae it easily follows that
(see (\ref{bik}) and (\ref{barOm})) $$E_{k+1} \bar\Omega E_{k+1} =
\bar\Omega.$$ From property {\bf P6}, we have that (see
(\ref{bps1})) $$\bar\P^T E_{k+1} = F\,\bar\P^T.$$

\no Moreover, by considering that (see (\ref{ci}))$$L\,\cI =
\pmatrix{ccc} \int_{t_0}^{t_1} P_0(x)dx & \dots &
\int_{t_0}^{t_1} P_{s-1}(x)dx\\ \vdots & &\vdots\\
\int_{t_{k-1}}^{t_k} P_0(x)dx & \dots & \int_{t_{k-1}}^{t_k}
P_{s-1}(x)dx\endpmatrix,$$ again from {\bf P6} we see that
$$E_s L\,\cI = L\,\cI F.$$ Finally, from (\ref{C})
we obtain
\begin{eqnarray*}
\lefteqn{E_k L\, C\, E_{k+1} =}\\ &=& (E_k L\,\cI)
D_s(\bar\P^TE_{k+1})(E_{k+1}\bar\Omega\,E_{k+1}) \\
&=& L\,\cI F\, D_s F \bar\P^T\bar\Omega = L\,\cI D_s
\bar\P^T\bar\Omega = L\, C.~\QED
\end{eqnarray*}

As a consequence, we have the following result.

\begin{theo}\label{simmetrico}
If the abscissae (\ref{ti}) are symmetric, then the method
(\ref{hbvmks})--(\ref{hbvmks1}) (i.e., (\ref{tableau})) is
symmetric, that is, it is self-adjoint.
\end{theo}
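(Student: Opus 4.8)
The plan is to verify the standard algebraic characterization of self-adjointness for a Runge--Kutta method applied to the tableau (\ref{tableau}). Write $\mathbf{b}=(b_0,\dots,b_k)^T$ for the weight vector, $C$ for the coefficient matrix (\ref{C}), and $\bar{e}=(1,\dots,1)^T\in\RR^{k+1}$. Since the symmetric distribution of the abscissae means that reversing the stage order ($t_i\mapsto t_{k-i}$ with $t_i+t_{k-i}=1$) is the correct matching permutation, the method is symmetric precisely when the weights are palindromic, $E_{k+1}\mathbf{b}=\mathbf{b}$, and
\[
C + E_{k+1}\,C\,E_{k+1} = \bar{e}\,\mathbf{b}^T.
\]
The first condition is free: the symmetry of the abscissae gives $E_{k+1}\bar\Omega E_{k+1}=\bar\Omega$ (this identity is already established inside the proof of Lemma~\ref{sim1}), which is exactly $b_{k-i}=b_i$. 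Hence all the content lies in the displayed, rank-deficient matrix identity.

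The key idea is to push Lemma~\ref{sim1} into this identity through the first-difference operator $L$ of (\ref{LF}). First I would record two elementary structural facts: $L\,\bar{e}=\bfo$, because every row of $L$ sums to zero, and $L\,E_{k+1}=-E_k\,L$, since reversing the entries of a vector negates its consecutive differences (a direct $k\times(k+1)$ index computation). Setting $M = C + E_{k+1}\,C\,E_{k+1} - \bar{e}\,\mathbf{b}^T$, left multiplication by $L$ kills the rank-one term and yields $L\,M = L\,C + L\,E_{k+1}\,C\,E_{k+1} = L\,C - E_k\,L\,C\,E_{k+1}$. By Lemma~\ref{sim1} the subtracted summand equals $L\,C$, so $L\,M=\bfo$.

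It then remains to upgrade $L\,M=\bfo$ to $M=\bfo$. As $L$ is the full-rank $k\times(k+1)$ difference operator, its kernel is the line spanned by $\bar{e}$; therefore every column of $M$ is a multiple of $\bar{e}$, i.e.\ $M=\bar{e}\,w^T$, and here $w^T$ is nothing but the first row of $M$. To show $w=\bfo$ I would exploit the two endpoint rows of $C$: because $t_0=0$ the first row of $\bar\cI$, hence of $C$, vanishes, while because $t_k=1$ together with $\int_0^1 P_0=1$ and $\int_0^1 P_j=0$ for $j\ge1$, the last row of $\bar\cI D_s$ is $e_1^T$ and the last row of $C$ equals $\mathbf{b}^T$. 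Using $e_1^T E_{k+1}=e_{k+1}^T$ and the palindromic weights $E_{k+1}\mathbf{b}=\mathbf{b}$, the first row of $M$ then computes to $0+\mathbf{b}^T-\mathbf{b}^T=\bfo$, so $w=\bfo$ and $M=\bfo$.

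The main obstacle is exactly this final passage from $L\,M=\bfo$ to $M=\bfo$: Lemma~\ref{sim1} governs only $L\,C$, so it pins down $C$ merely up to an additive rank-one defect $\bar{e}\,w^T$, and the genuine work is to annihilate that free row by invoking the special ``boundary'' structure of the tableau at $t_0=0$ and $t_k=1$. The remaining ingredients---the identities $L\,\bar{e}=\bfo$ and $L\,E_{k+1}=-E_k\,L$ and the translation of self-adjointness into the matrix condition above---are routine.
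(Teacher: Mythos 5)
Your proof is correct, and it reaches the conclusion by a genuinely different route than the paper. The paper stays in the block-BVM formulation: it writes the method as $L\otimes I_{2m}\,\hbfy = h\,L\,C\otimes I_{2m}\,f(\hbfy)$, applies $E_{k+1}$ to the stage vector, and uses $E_kLE_{k+1}=-L$ together with Lemma~\ref{sim1} to conclude that the time-reversed discrete solution satisfies the same difference equation with $-h$; self-adjointness follows at once, with no need to control $C$ itself beyond its image under $L$. You instead verify the standard Runge--Kutta tableau characterization $E_{k+1}\mathbf{b}=\mathbf{b}$ and $C+E_{k+1}CE_{k+1}=\bar e\,\mathbf{b}^T$. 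This forces you to do the extra work the paper avoids: Lemma~\ref{sim1} only pins down $LC$, so after showing $LM=\bfo$ you must remove the rank-one ambiguity $M=\bar e\,w^T$, and your way of doing so --- the first row of $\bar\cI$ vanishes because $t_0=0$, while the last row of $\bar\cI D_s\bar\P^T\bar\Omega$ equals $\mathbf{b}^T$ because $\int_0^1P_0=1$ and $\int_0^1P_j=0$ for $j\ge1$ --- is sound (it is essentially the same computation used in Theorem~\ref{ordth} to identify the last row of $B$ with the quadrature weights). What your version buys is the explicit matrix identity $C+E_{k+1}CE_{k+1}=\bar e\,\mathbf{b}^T$, i.e.\ a direct certificate that the adjoint tableau coincides with the original under stage reversal; what the paper's version buys is brevity, since the difference operator $L$ absorbs both the inhomogeneous term $\bar e\otimes y_0$ and the rank-one defect in a single stroke. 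All the individual steps you use ($L\bar e=\bfo$, $LE_{k+1}=-E_kL$, $\ker L=\mathrm{span}\{\bar e\}$, and the palindromy of the weights from $E_{k+1}\bar\Omega E_{k+1}=\bar\Omega$) check out.
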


\proof Indeed, the discrete solution, $\hbfy$, satisfies the
equation (see (\ref{tableau})--(\ref{C}) and (\ref{LF}))
$$L\otimes I_{2m}\, \hbfy = h\,L\,C\otimes I_{2m}\,f(\hbfy).$$

\no Considering that $E_kLE_{k+1}=-L$ and, from Lemma~\ref{sim1},
$E_k\,L\,C\,E_{k+1}=LC$, one then obtains
\begin{eqnarray*}L\otimes I_{2m} (E_{k+1}\otimes I_{2m}\,\hbfy)
&=&-h\,L\,C\otimes I_{2m}\left(E_{k+1}\otimes I_{2m}\,f(\hbfy)\right)\\&=&
-h\,L\,C\otimes I_{2m}\, f\left( E_{k+1}\otimes
I_{2m}\,\hbfy\right).\end{eqnarray*} \no The thesis then
follows by observing that the vector $E_{k+1}\otimes I_{2m}\,
\hbfy$ contains the time-reversed discrete solution.\QED

\medskip The next theorem summarizes the results about HBVM$(k,s)$.

\begin{theo}[Main Result]\label{main}
For all $s=1,2,\dots$, and $k\ge s$, the HBVM($k$,$s$) method:
\begin{enumerate}
\item has order of accuracy $2s$;

\smallskip
\item is energy-preserving for polynomial Hamiltonians of degree
not larger than $2k/s$;

\smallskip
\item for general $C^{(2k+1)}$ Hamiltonians, the energy error at each
integration step is $O(h^{2k+1})$, if $h$ is the used
stepsize;\,\footnote{Consequently, on any finite interval the
global energy error is not larger than $O(h^{2k})$.}

\smallskip
\item is symmetric and, therefore, precisely $A$-stable.
\end{enumerate}
\end{theo}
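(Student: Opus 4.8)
The four assertions can be assembled from results already available, completing only what is genuinely missing. Item~1 is exactly Corollary~\ref{ordine}, and the symmetry claimed in item~4 is exactly Theorem~\ref{simmetrico}. The plan is therefore to concentrate on the energy statements (items~2 and~3), which both follow from a single identity, and then to upgrade symmetry to \emph{precise} $A$-stability in item~4.

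For items~2 and~3 I would begin from the very identity that motivated the construction. Since $c_s=1$, the updated numerical value is $y_s=\sigma(1)$, so \eqref{Hy} applied along $\sigma$ gives
$$H(y_s)-H(y_0)=\int_0^1\sigma'(t)^T\nabla H(\sigma(t))\,dt.$$
Expressing the integral as the $(k+1)$-point Lobatto quadrature plus a remainder $R$, inserting the expansion \eqref{expan} of $\sigma'$, and invoking the orthogonality conditions \eqref{ortcon1}, the quadrature sum collapses to $\sum_{j=0}^{s-1}\eta_j^{-1}\gamma_j^T J^{-1}\gamma_j$; since $J^{-1}=-J$ is skew-symmetric, every quadratic form $\gamma_j^T J^{-1}\gamma_j$ vanishes, so the sum is zero and
$$H(y_s)-H(y_0)=R.$$
Thus the per-step energy variation equals the quadrature error alone. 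For item~2, if $\deg H=\nu\le 2k/s$, the integrand $\sigma'(t)^T\nabla H(\sigma(t))$ is a polynomial of degree $\nu s-1\le 2k-1$, which the $(k+1)$-point Lobatto formula (degree of precision $2k-1$) integrates exactly, whence $R=0$ and $H(y_s)=H(y_0)$. For item~3, when $H\in C^{(2k+1)}$ the integrand is $C^{(2k)}$, and the standard error estimate for a quadrature of precision $2k-1$ over a step of length $h$ gives $R=O(h^{2k+1})$, the claimed per-step bound; summing the $O(h^{-1})$ contributions over a finite interval yields the $O(h^{2k})$ global bound mentioned in the footnote.

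It remains to prove $A$-stability. My plan is to show that, on any linear system $y'=My$, the one-step map of HBVM$(k,s)$ is independent of $k$ for $k\ge s$, so that it coincides with HBVM$(s,s)$, i.e.\ the Lobatto IIIA method of order $2s$ (Remark~\ref{lobattoIIIA}). Indeed, the only place where $k$ enters the determination of $\sigma$ is the quadrature appearing in \eqref{ortcon1}; for a linear field the integrand $P_j(t)\,M\sigma(t)$ has degree at most $(s-1)+s=2s-1$, so every Lobatto rule with at least $s+1$ nodes integrates it exactly and returns the same coefficients $\gamma_j$---and hence the same map---for all $k\ge s$. Specialising to the scalar test equation $y'=\lambda y$ shows that the stability function $R(z)$ of HBVM$(k,s)$ equals that of Lobatto IIIA of order $2s$, which is $A$-stable (see, e.g., \cite{HW}); therefore so is HBVM$(k,s)$. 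Finally, symmetry (Theorem~\ref{simmetrico}) forces $R(z)R(-z)=1$, hence $|R(iy)|=1$ for all real $y$; together with $A$-stability this locates the boundary of the stability region exactly on the imaginary axis, i.e.\ the method is precisely $A$-stable.

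I expect the $A$-stability in item~4 to be the main obstacle: items~1--3 are essentially bookkeeping built on the construction and the quadrature error formula, whereas $A$-stability requires the $k$-independence argument on linear problems (or, equivalently, a direct study of $R(z)$). I would take particular care in verifying that eliminating the silent stages in \eqref{hbvm_final} for a linear field reproduces precisely the Lobatto IIIA stages, and in confirming that ``precisely $A$-stable'' is understood as the stability region coinciding with the closed left half-plane---exactly the conclusion delivered by $R(z)R(-z)=1$ once plain $A$-stability is in hand.
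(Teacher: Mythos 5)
Your proposal is correct, and for Items 1--3 it follows essentially the same route as the paper: Item 1 is delegated to Corollary~\ref{ordine}, and Items 2--3 rest on the identity $H(y_s)-H(y_0)=R$, obtained by writing the line integral along $\sigma$ as the $(k+1)$-point Lobatto sum plus the quadrature remainder and observing that the orthogonality conditions \eqref{ortcon1} annihilate the sum. Your explicit computation that the sum collapses to $\sum_j\eta_j^{-1}\gamma_j^TJ^{-1}\gamma_j$, which vanishes by skew-symmetry of $J^{-1}=-J$, is exactly the mechanism the paper invokes (more tersely) via \eqref{orto1}--\eqref{ortcon1} and Remark~\ref{Rs}; your degree count $\nu s-1\le 2k-1$ and the $O(h^{2k+1})$ quadrature-error bound match the paper's argument. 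Where you genuinely diverge is Item 4: the paper simply cites Theorem~\ref{simmetrico} and treats ``symmetric, therefore precisely $A$-stable'' as immediate, which is a known shortcut but not a complete argument, since $R(z)R(-z)=1$ alone only forces $|R(iy)|=1$ and a stability region whose complement is its reflection (e.g.\ $R(z)=(1-z)/(1+z)$ satisfies the functional equation yet is stable in the \emph{right} half-plane). Your additional step --- that on a linear field the integrand $P_j(t)M\sigma(t)$ in \eqref{ortcon1} has degree at most $2s-1$, so every Lobatto rule with $k+1\ge s+1$ nodes yields the same coefficients $\gamma_j$ and hence the same one-step map as HBVM$(s,s)$, i.e.\ Lobatto IIIA of order $2s$ --- correctly identifies the stability function and, combined with the known $A$-stability of Lobatto IIIA and the symmetry relation, delivers precise $A$-stability. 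This buys you a self-contained proof of Item 4 where the paper relies on an implicit appeal; it also anticipates the $k$-independence of the linear stability properties of HBVMs, which the paper does not state.
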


\begin{proof} Item 1 follows from  Corollary~\ref{ordine}.
 Item 2 follows from the fact that, for such polynomial Hamiltonians, the
vanishing discrete line integral equals the continuous line
integral (see (\ref{exactness}) and (\ref{intexk})). Similarly,
Item 3 follows from the fact that, by using arguments similar to
those used in Remark~\ref{Rs} (see (\ref{intex})), the energy
error per integration step equals the quadrature error of the
Gauss-Lobatto formula of order $2k$. Finally, Item 4 follows from
Theorem~\ref{simmetrico}, since the Lobatto abscissae $\{t_i\}$
are symmetrically distributed.\QED\end{proof}

\begin{rem} From the result of Theorem~\ref{main}, we can then concude
that HBVM$(k,s)$ is optimal, both from the point of view of the order and
stability properties. Moreover, its computational cost, as observed in
Remarks~\ref{snotk} and \ref{RK}, is seen to depend on $s$, rather than on $k$.
\end{rem}

\section{Numerical Tests}\label{tests}
We here report a few numerical tests, in order to show the potentialities of
HBVM$(k,s)$.

Let then consider, at first, the Hamiltonian problem characterized
by the polynomial Hamiltonian (4.1) in \cite{Faou},
\begin{equation}\label{fhp}
H(p,q) = \frac{p^3}3 -\frac{p}2 +\frac{q^6}{30} +\frac{q^4}4
-\frac{q^3}3 +\frac{1}6,
\end{equation}

\no having degree $\nu=6$, starting at the initial point
$y_0\equiv (q(0),p(0))^T=(0,1)^T$. For such a problem, in
\cite{Faou} it has been  experienced a numerical drift in the
discrete Hamiltonian, when using the fourth-order Lobatto IIIA
method\,\footnote{\,Such method coincides with the HBVM(2,2) above
described.} with stepsize $h=0.16$. This is confirmed by the plot
in Figure~\ref{faoufig}, where a linear drift in the numerical
Hamiltonian is clearly observable. On the other hand, by using the
fourth-order HBVM(6,2)  with the same stepsize, the drift
disappears, as shown in Figure~\ref{faoufig1}, since such method
exactly preserves polynomial Hamiltonians of degree up to 6.
Moreover, the order of convergence $p=4$ is (numerically)
confirmed by the results listed in Table~\ref{tab1}, where the
used stepsizes $h$, the maximum estimated error (obtained as the
difference of two consecutive solutions), and the estimated order
of convergence are listed.

\begin{figure}[hp]
\centerline{\includegraphics[width=0.9\textwidth]{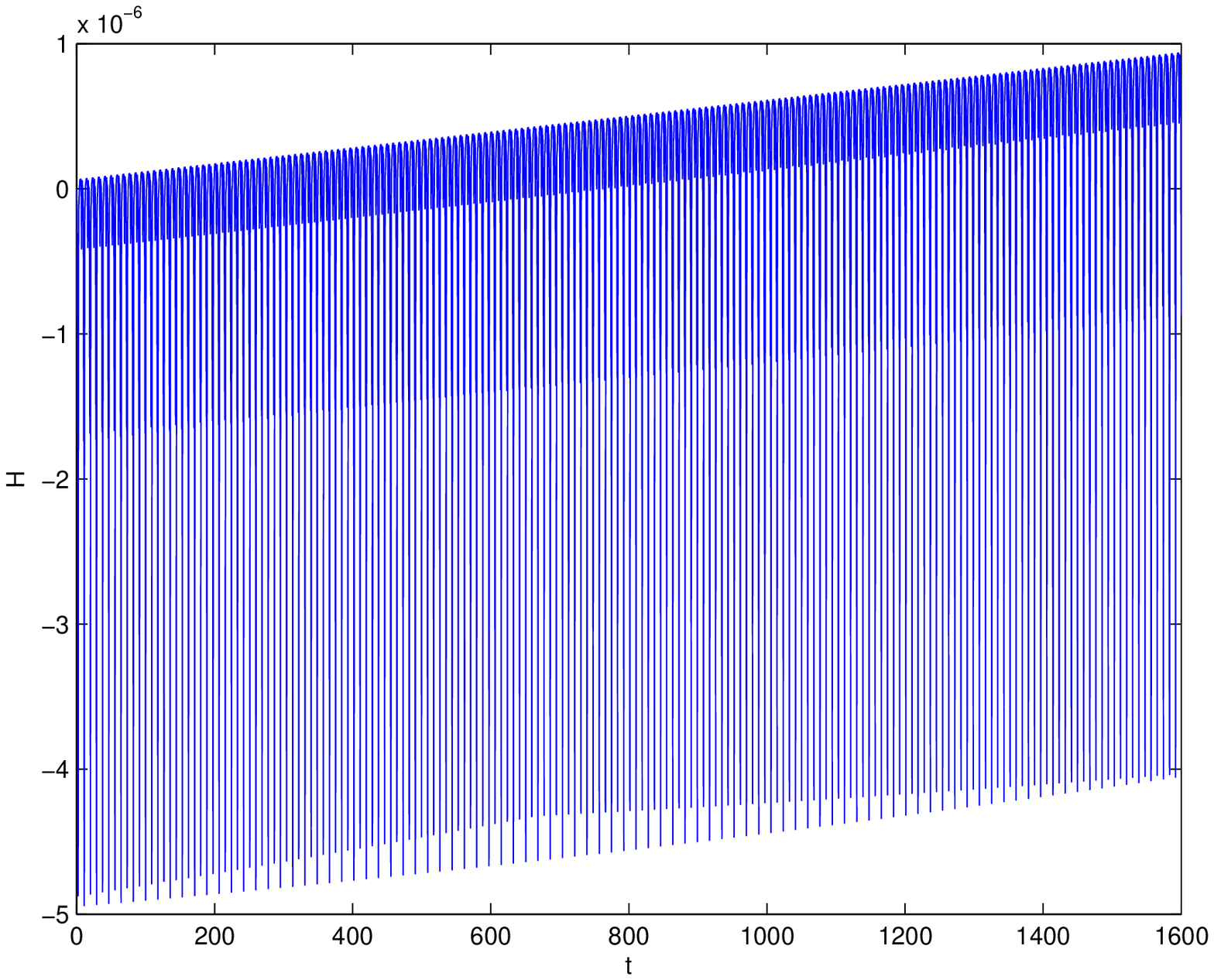}}
\caption{\protect\label{faoufig} Fourth-order Lobatto IIIA method,
$h=0.16$, problem (\ref{fhp}).} \bigskip

\centerline{\includegraphics[width=0.9\textwidth]{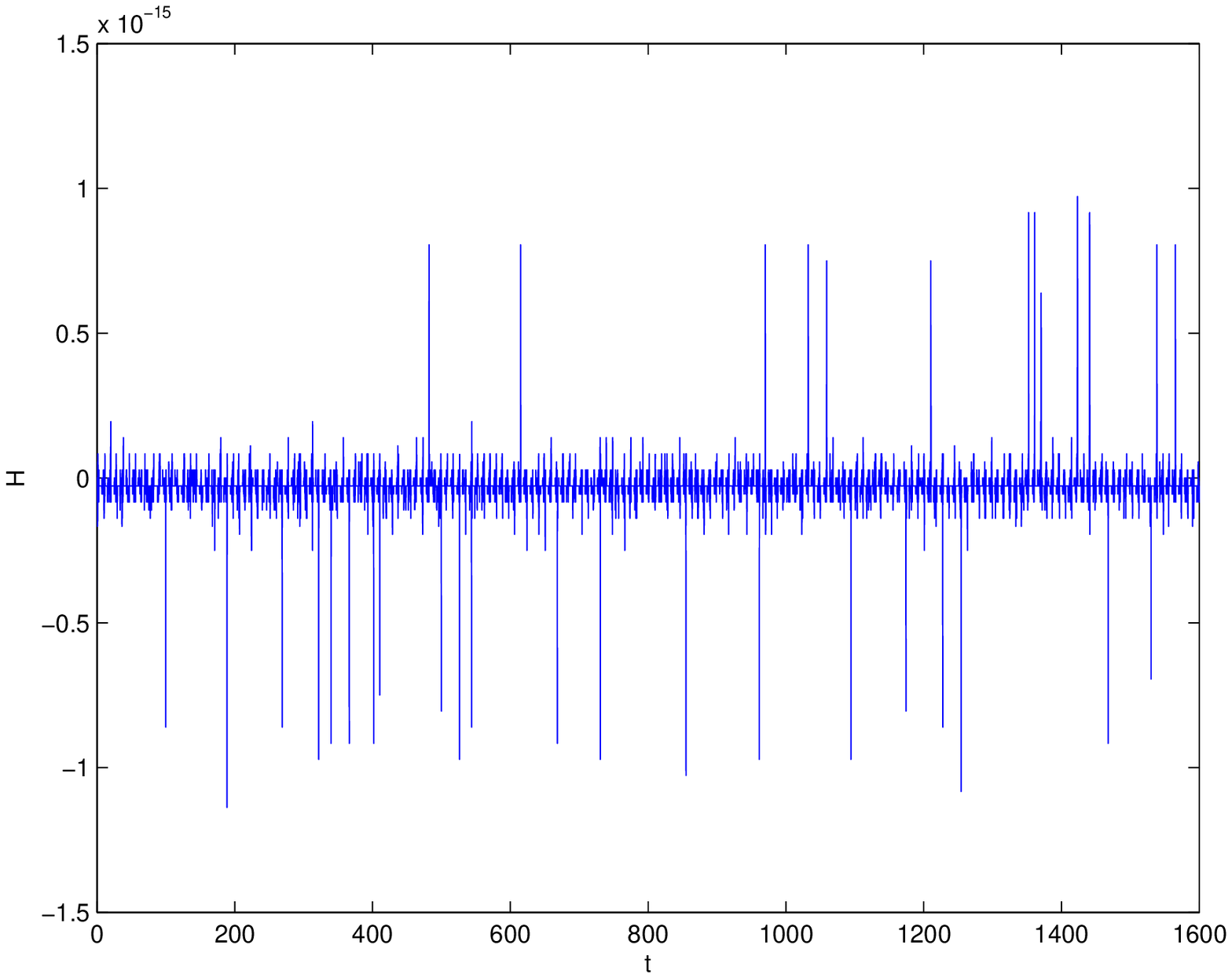}}
\caption{\protect\label{faoufig1} Fourth-order HBVM(6,2) method,
$h=0.16$, problem (\ref{fhp}).}
\end{figure}

The second test problem is the Fermi-Pasta-Ulam problem (see
\cite[Section\,I.5.1]{HLW}), defined by the Hamiltonian
\begin{equation}\label{fpu}
H(p,q) = \frac{1}2\sum_{i=1}^m\left(p_{2i-1}^2+p_{2i}^2\right)
+\frac{\omega^2}4\sum_{i=1}^m\left(q_{2i}-q_{2i-1}\right)^2
+\sum_{i=0}^m\left(q_{2i+1}-q_{2i}\right)^4,
\end{equation}

\no with $q_0=q_{2m+1}=0$, $m=3$, $\omega=50$, and starting vector
$$p_i=0, \quad q_i = (i-1)/10, \qquad i=1,\dots,6.$$ In such a
case, the Hamiltonian function is a polynomial of degree 4, so
that the fourth-order HBVM(4,2) method, which is used with
stepsize $h=0.05$, is able to exactly preserve the Hamiltonian, as
confirmed by the plot in Figure~\ref{fpufig1}, whereas the
fourth-order Lobatto IIIA method provides the result plotted in
Figure~\ref{fpufig}. Moreover, in Table~\ref{tab2} we list
corresponding results as in Table~\ref{tab1}, again confirming the
fourth-order convergence.

\begin{figure}[hp]
\includegraphics[width=0.9\textwidth]{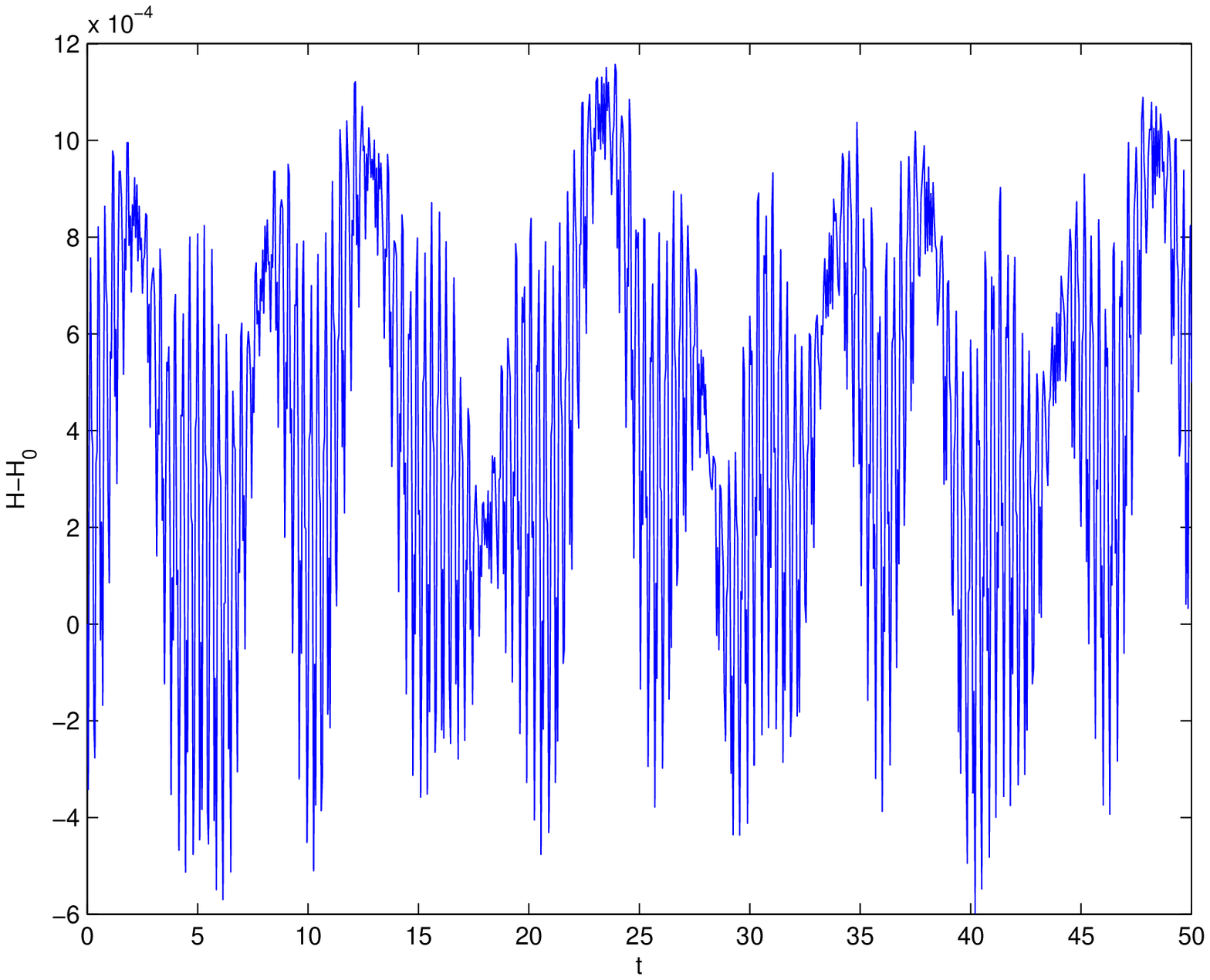}
\caption{\protect\label{fpufig} Fourth-order Lobatto IIIA method,
$h=0.05$, problem (\ref{fpu}).}\bigskip

\includegraphics[width=0.9\textwidth]{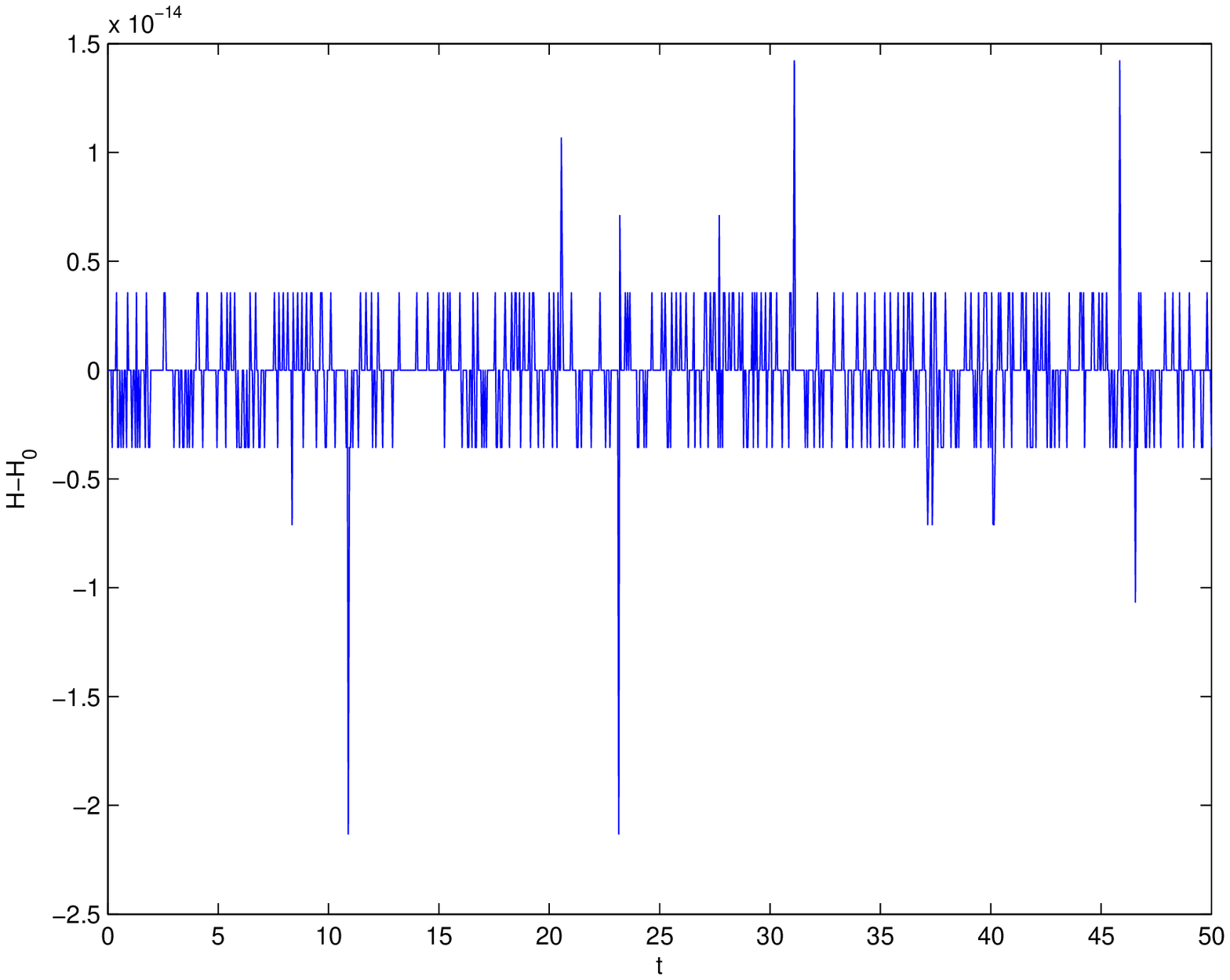}
\caption{\protect\label{fpufig1} Fourth-order HBVM(4,2) method,
$h=0.05$, problem (\ref{fpu}).}
\end{figure}

\begin{table}[t]
\caption{\protect\label{tab1} Numerical order of convergence for
the HBVM(6,2) method, problem (\ref{fhp}).}
\begin{tabular}{|c|lllll|} \hline
$h$  & 0.32 & 0.16 & 0.08 & 0.04 & 0.02 \\
\hline
error  &$2.288\cdot10^{-2}$ &$1.487\cdot10^{-3}$ &$9.398\cdot10^{-5}$ &$5.890\cdot10^{-6}$ &$3.684\cdot10^{-7}$\\
\hline
order  & -- & 3.94 &3.98 &4.00 &4.00\\
\hline
\end{tabular}
\medskip

\caption{\protect\label{tab2} Numerical order of convergence for
the HBVM(4,2) method, problem (\ref{fpu}).}
\begin{tabular}{|c|lllll|} \hline
$h$  & $1.6\cdot10^{-2}$ &$8\cdot10^{-3}$ & $4\cdot10^{-3}$ & $2\cdot10^{-3}$ & $10^{-3}$ \\
\hline
error  & $3.030$ &$1.967\cdot10^{-1}$ &$1.240\cdot10^{-2}$ &$7.761\cdot10^{-4}$ &$4.853\cdot10^{-5}$ \\
\hline
order  & -- & 3.97& 3.99 &4.00 &4.00\\
\hline
\end{tabular}
\medskip

\caption{\protect\label{tab3} Numerical order of convergence for
the HBVM(6,2) method, problem (\ref{biot}).}
\begin{tabular}{|c|lllll|} \hline
$h$  & $3.2\cdot10^{-2}$ &$1.6\cdot10^{-2}$ &$8\cdot10^{-3}$ & $4\cdot10^{-3}$ & $2\cdot10^{-3}$ \\
\hline
error  & $3.944\cdot10^{-6}$ &$2.635\cdot10^{-7}$ &$1.729\cdot10^{-8}$
&$1.094\cdot10^{-9}$ &$6.838\cdot10^{-11}$ \\
\hline
order  & -- & 3.90& 3.93 &3.98 &4.00\\
\hline
\end{tabular}
\end{table}

In the previous examples, the Hamiltonian function was a
polynomial. Nevertheless, as is easily argued from
Theorem~\ref{main}, HBVM($k$,$s$) are expected to produce  a {\em
practical} conservation of the energy when applied to systems
defined by a non-polynomial Hamiltonian function which are
sufficiently differentiable. As an example, we consider the motion
of a charged particle in a magnetic field with Biot-Savart
potential.\footnote{\,As an example, this kind of motion causes
the well known phenomenon of {\em aurora borealis}.} It is defined
by the Hamiltonian
\begin{eqnarray}\label{biot}
\lefteqn{H(x,y,z,\dot{x},\dot{y},\dot{z}) = }\\&&\frac{1}{2m}
\left[ \left(\dot{x}-\aa\frac{x}{\rho^2}\right)^2 +
\left(\dot{y}-\aa\frac{y}{\rho^2}\right)^2 +
\left(\dot{z}+\aa\log(\rho)\right)^2\right],\nonumber
\end{eqnarray}

\no with $\rho=\sqrt{x^2+y^2}$, $\aa= e \,B_0$,  $m$ is the
particle mass, $e$ is its charge, and $B_0$ is the magnetic field
intensity. We have used the values $$m=1, \qquad e=-1, \qquad
B_0=1,$$with starting point
$$x = 0.5, \quad y = 10, \quad z = 0, \quad
\dot{x} =  -0.1, \quad \dot{y} = -0.3, \quad \dot{z} = 0.$$

\no In Figure~\ref{biotfig0}, the trajectory of the particle in
the interval $[0,10^3]$ is plotted in the phase space. As one can
see, it is a helix that wings downward. By using the fourth-order
Lobatto IIIA method with stepsize $h=0.1$, a drift in the
numerical Hamiltonian can be again observed (see
Figure~\ref{biotfig}),  so that the method does introduce a
friction. When using the HBVM(4,2) method with the same stepsize,
the drift disappears and the Hamiltonian turns out to be almost
preserved (see Figure~\ref{biotfig1}). As expected, the result
improves if we increase $k$: the plot in Figure~\ref{biotfig2} has
been obtained by using the HBVM(6,2), from which one realizes that
a practical preservation of the Hamiltonian is reached. Finally,
the data listed in Table~\ref{tab3} confirm the fourth-order
convergence of the latter method.

\begin{figure}[hp]
\includegraphics[width=0.9\textwidth]{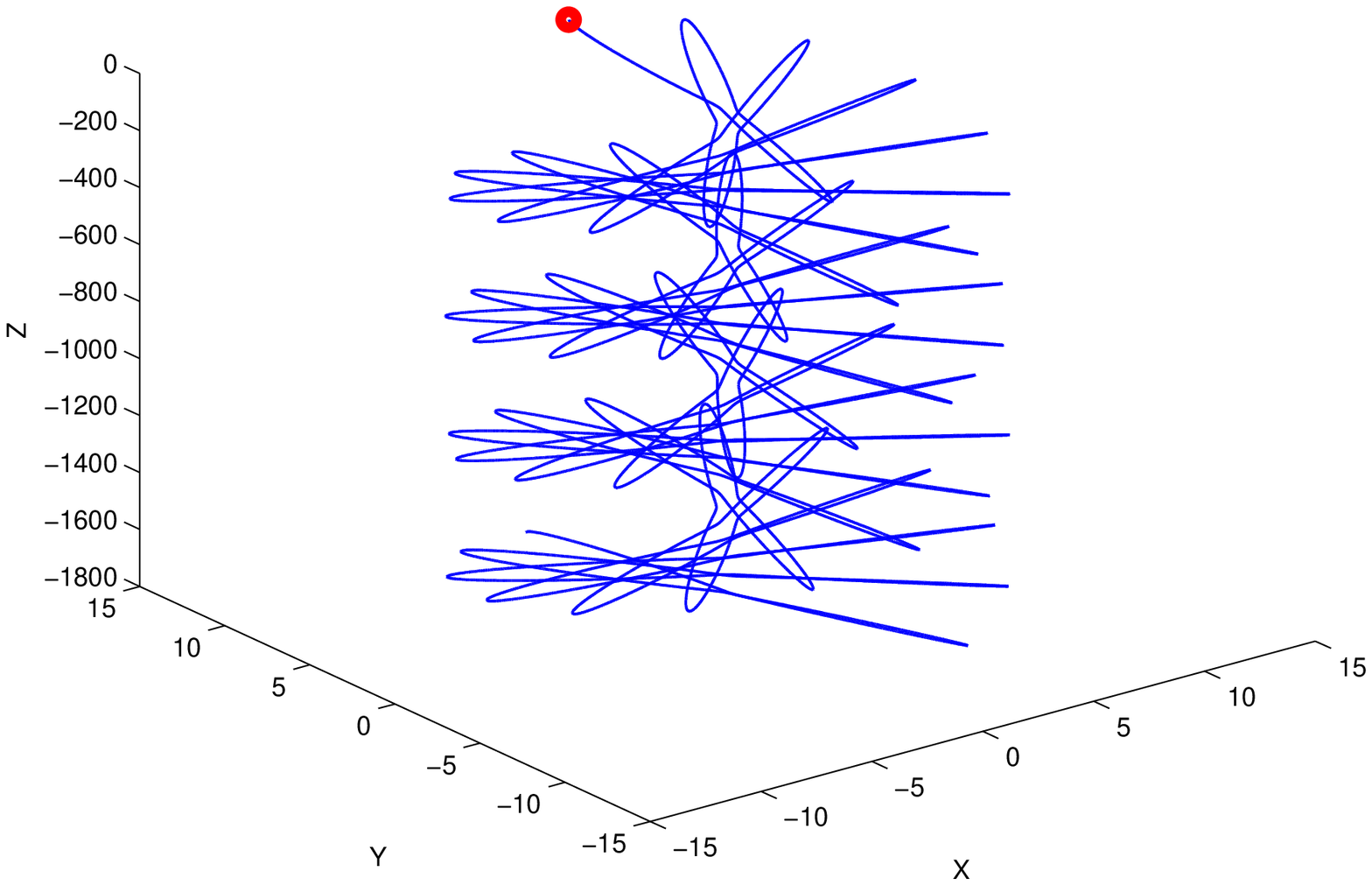}
\caption{\protect\label{biotfig0} Phase-space plot of the solution of problem
(\ref{biot}) for $0\le t\le 10^3$ (the circle denotes the starting
point of the trajectory).}\bigskip

\includegraphics[width=0.9\textwidth]{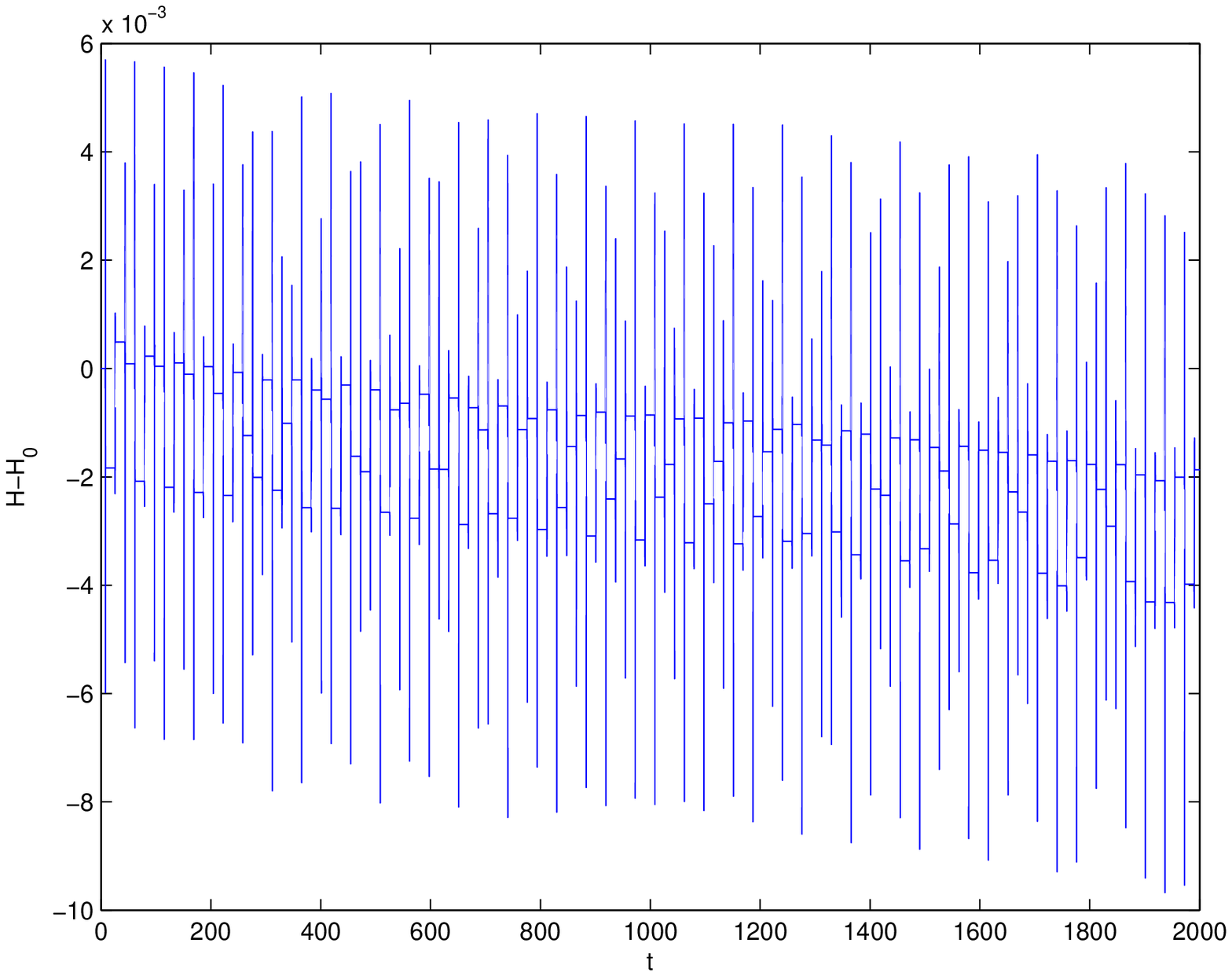}
\caption{\protect\label{biotfig} Fourth-order Lobatto IIIA method,
$h=0.1$, problem (\ref{biot}).}
\end{figure}

\begin{figure}[hp]
\includegraphics[width=0.9\textwidth]{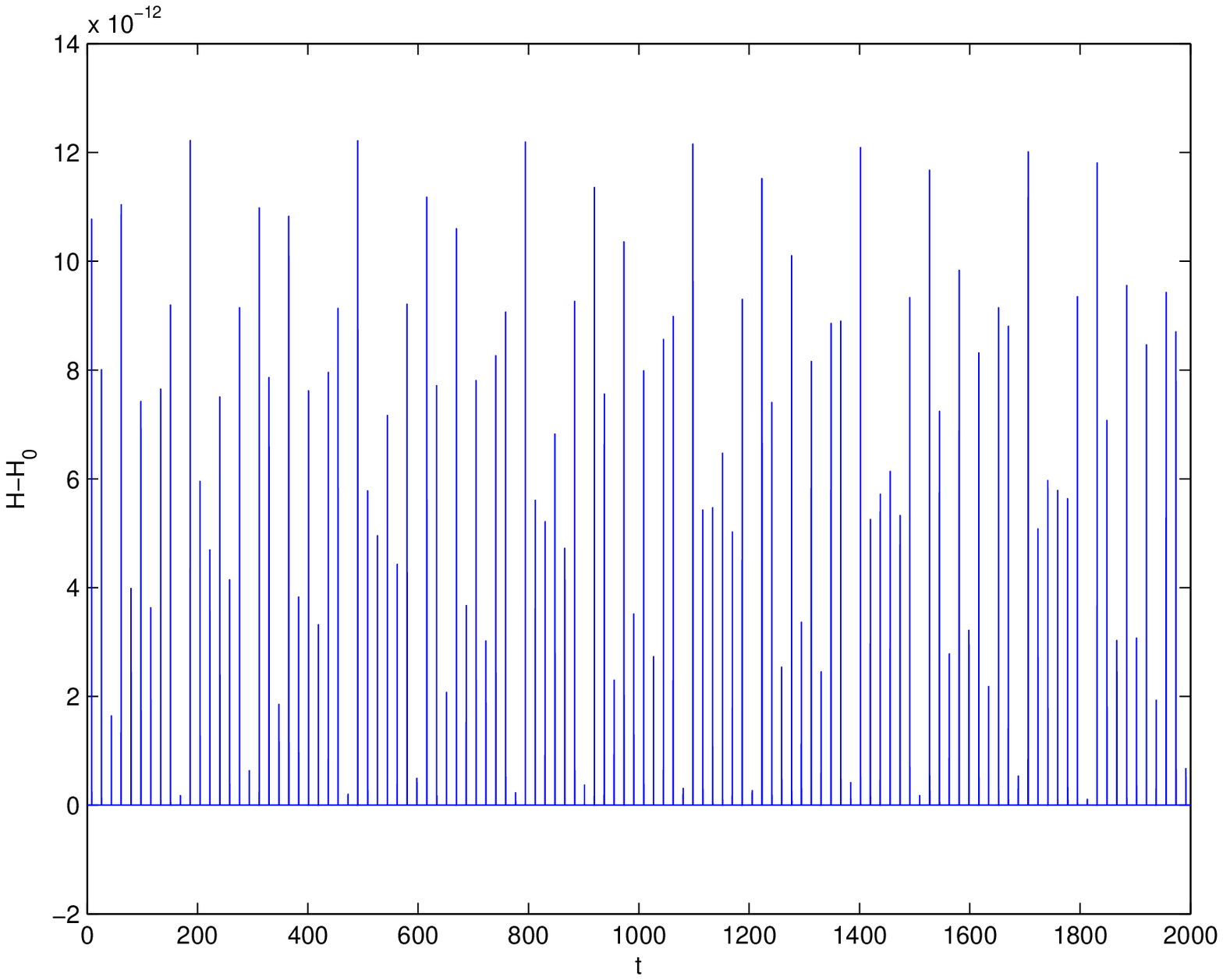}
\caption{\protect\label{biotfig1} Fourth-order HBVM(4,2) method,
$h=0.1$, problem (\ref{biot}).}\bigskip

\includegraphics[width=0.9\textwidth]{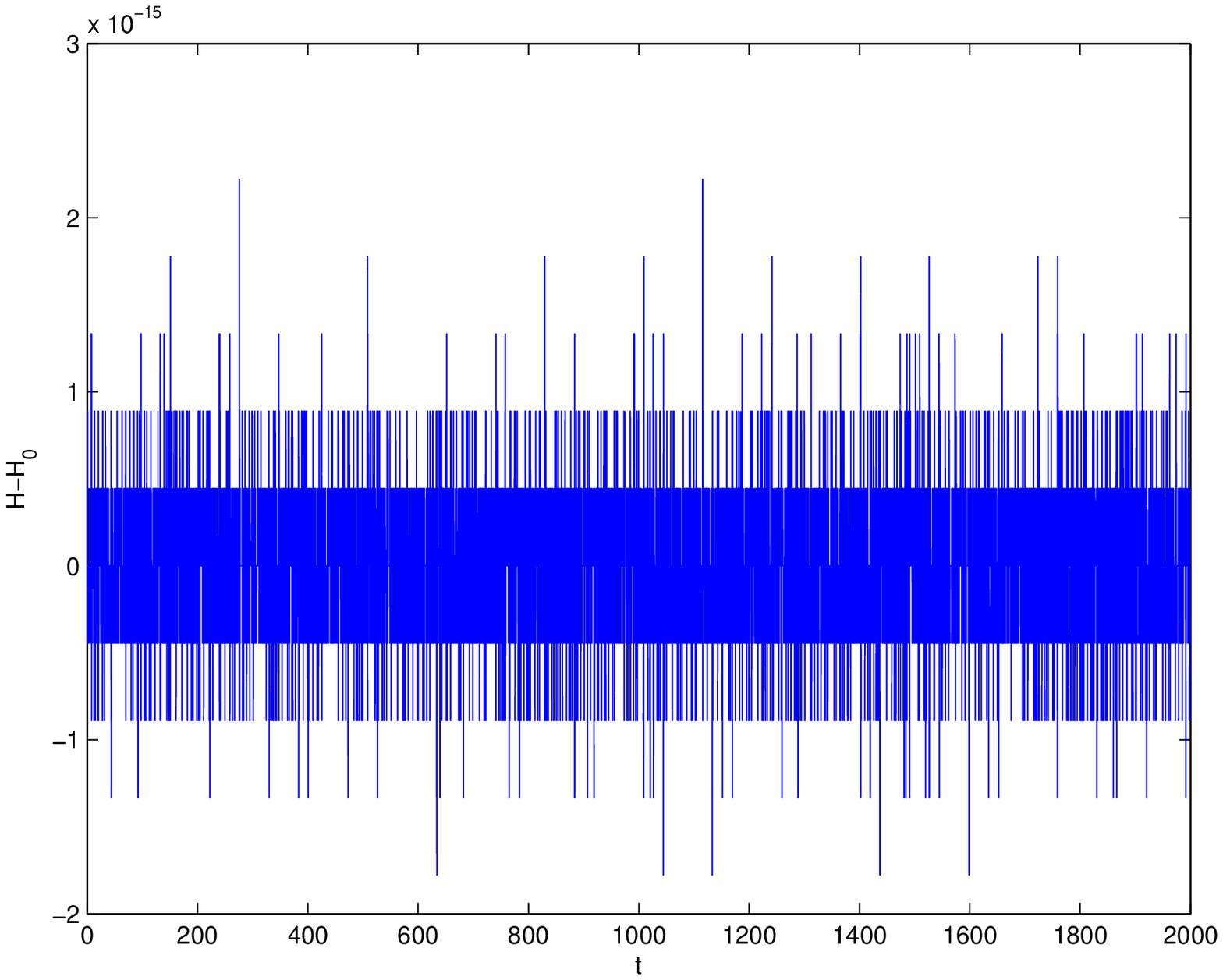}
\caption{\protect\label{biotfig2} Fourth-order HBVM(6,2) method,
$h=0.1$, problem (\ref{biot}).}
\end{figure}

\section{Conclusions}\label{fine} In this paper a new class of numerical
methods, able to preserve polynomial Hamiltonians, has been
studied in details. From the analysis, it turns out that such
methods can be regarded as a generalization of collocation
Runge-Kutta Lobatto IIIA methods. Nevertheless, the fact of being
characterized by a matrix pencil, perfectly fits the framework of
block BVMs, so that we have named them Hamiltonian BVMs (HBVMs). A
number of numerical tests prove their effectiveness in preserving
the Hamiltonian function when evaluated along the numerical
solution,  as well as confirm the predicted order of convergence.
Possible different choices of the abscissae, as well as the actual
efficient implementation of the methods, will be the subject of
future investigations.

\section*{Acknowledgements} We wish to thank
prof.\,Mario Trigiante for providing us with problem (\ref{biot}).
We are grateful to Ernst Hairer, for his comments on the proof of
Corollary~\ref{ordine}. We also thank the referees, for their useful
comments and suggestions.

\section*{Appendix: some properties of shifted Legendre polynomials}

A number of useful properties of shifted Legendre polynomials are
here summarized: for their proof see any book on special functions
(e.g., \cite{AS}).
\begin{description}

\item[\bf P1.] Generalized Rodrigues formula: for all $n=0,1,\dots$,
\,$P_n(x)$ has degree $n$ and can be defined as $$P_n(x) =
\frac{1}{n!} \frac{d^n}{dx^n} \left[(x^2-x)^n\right].$$

\item[\bf P2.] Lobatto quadrature: the Lobatto abscissae $\{c_i\}$ (\ref{ci}),
of the formula of degree $2s$, are the zeros of the polynomial
$$(x^2-x) P_s'(x),$$ where $P'_s(x)$ denotes the
derivative of $P_s(x)$. The corresponding weights (\ref{bi}) are
given by:
$$b_i = \frac{1}{s(s+1) (P_s(c_i))^2}, \qquad i=0,1,\dots,s,$$
which are, therefore, all positive.

\item[\bf P3.] Orthogonality: $$\int_0^1 P_n(x)P_m(x)\,dx =
\frac{1}{2n+1} \delta_{nm}, \qquad n=0,1,\dots,$$ where, as usual,
$\delta_{nm}$ denotes the Kronecker delta.

\item[\bf P4.] Recurrence formula: by setting hereafter $P_{-1}(x)\equiv 0$ and
$P_0(x)\equiv 1$,
$$(n+1)P_{n+1}(x) = (2n+1)(2x-1)P_n(x)-n P_{n-1}(x), \qquad
n=0,1,\dots.$$

\item[\bf P5.] Explicit formula: $$P_n(x) = (-1)^n\sum_{i=0}^n \pmatrix{c} n\\
i\endpmatrix \pmatrix{c} n+i\\ i\endpmatrix (-x)^i, \qquad
n=0,1,\dots.$$

\item[\bf P6.] Symmetry: $$P_n(1-x) = (-1)^n P_n(x), \qquad
n=0,1,\dots.$$

\item[\bf P7.] Symmetry at the end-points: $$P_n(0) = (-1)^n, \quad P_n(1) = 1, \qquad n=0,1,\dots.$$

\item[\bf P8.] Derivatives: $$ 2(2n+1) P_n(x) = \frac{d}{dx} \left[ P_{n+1}(x) -
P_{n-1}(x) \right], \qquad n=0,1,\dots.$$

\item[\bf P9.] Integrals: \begin{eqnarray*}2\int_{0}^x P_0(t)\,dt &=&
2x = P_1(x)+P_0(x),\\[2mm]
2(2n+1)\int_{0}^x P_n(t)\,dt &=& P_{n+1}(x) - P_{n-1}(x), \qquad
n=1,2,\dots.\end{eqnarray*}

\item[\bf P10.] Shifted Legendre differential equations. The shifted
Legendre polynomials satisfy the second order differential
equation:
$$\frac{d}{dx} \left[(x^2-x)P_n'(x)\right] + n(n+1)P_n(x) = 0, \qquad
n=0,1,\dots.$$

\item[\bf P11.] From {\bf P2} and {\bf P10}, it follows that, if
(\ref{ci}) are the Lobatto abscissae of the formula of order $2s$
(i.e., exact for polynomials of degree $2s-1$), then
$$\int_0^{c_i} P_s(x)\,dx = 0, \qquad i=0,1,\dots,s.$$

\item[\bf P12.] A few examples:
\begin{eqnarray*}
P_0(x) &\equiv& 1,\\
P_1(x) &=& 2x-1,\\
P_2(x) &=& 6x^2-6x+1,\\
P_3(x) &=& 20x^3-30x^2+12x-1,\\
\dots
\end{eqnarray*}

\end{description}

\end{document}